\documentclass[11pt,reqno]{amsart}

\usepackage{amsmath, amsthm, amscd, amssymb, amsfonts, amsbsy}

\usepackage{esint}

\usepackage[bitstream-charter]{mathdesign}
\usepackage[T1]{fontenc}
\usepackage{xcolor} 
\usepackage[pdftex]{hyperref}
\newtheorem{lemma}{Lemma}

\newtheorem{remark}{Remark}
\newtheorem{assumption}{Assumption}
\newtheorem{theorem}{Theorem}

\newcommand{\N}{\mathbb{N}}
\newcommand{\R}{\mathbb{R}}

\newcommand{\bR}{\mathbb{R}}
\newcommand{\bu}{\mathbf{u}}
\newcommand{\bv}{\mathbf{v}}
\newcommand{\bw}{\mathbf{w}}

\newcommand{\norm}[1]{{\left\Vert #1 \right\Vert}}
\newcommand{\set}[1]{{\left\lbrace #1 \right\rbrace}}
\newcommand{\lr}[1]{{\left( #1 \right)}}

\newcommand{\intR}{\int_{\R^3}}

\newcommand{\ep}{\varepsilon}
\newcommand{\na}{\nabla}

\newcommand{\ti}{\to \infty}
\newcommand{\dx}{\, dx}

\newcommand{\qand}{\quad \text{ and } \quad}
\newcommand{\qif}{\quad \text{ if } \quad}

\newcommand{\qon}{\quad \text{ on }}

\newcommand{\qfs}{\quad \text{ for some }}

\DeclareMathOperator*{\BMO}{BMO}

\begin{document}
\baselineskip=18pt

\title[Liouville-Type Theorems]{Refined Liouville-Type Theorems for the Stationary Navier--Stokes Equations}

\author{Youseung Cho \& Minsuk Yang}

\address{
Yonsei University, Department of Mathematics, Yonseiro 50, Seodaemungu, Seoul 03722, Republic of Korea
}
\email{(Cho) youseung@yonsei.ac.kr \& (Yang) m.yang@yonsei.ac.kr}

\begin{abstract}
We study smooth solutions to the three-dimensional stationary Navier--Stokes equations and establish new Liouville-type theorems under refined decay assumptions.
Building on the work of Cho et al., we introduce a refinement to previously known integrability criteria and analyze the associated averaged quantities.
Our main result shows that if the $L^p$ growth rate of a solution remains bounded for some $3/2 < p < 3$, then the solution must be trivial.
The proof combines averaged decay estimates, energy inequalities, and an iteration scheme.
\\
\\
\noindent{\bf AMS Subject Classification Number:} 
35Q30, 
35B53, 
76D03
\\
\noindent{\bf keywords:} 
Navier--Stokes equations, 
Liouville-type theorem, 
Energy estimates
\end{abstract}

\maketitle

\section{Introduction}
\label{S1}

We study smooth solutions to the three-dimensional stationary Navier--Stokes equations 
\begin{align}
\label{E11}
\begin{split}
-\Delta \bu + (\bu \cdot \na) \bu + \na \pi &= 0, \\
\nabla \cdot \bu &= 0,
\end{split}
\end{align}
where $\bu$ denotes the velocity field and $\pi$ the pressure.
Under additional assumptions, such solutions reduce to trivial states, i.e., identically zero or constant. 
We establish Liouville-type theorems of this form using the energy method.
Throughout the paper, we use the notation $B(\rho) = \set{x \in \R^3 : |x| < \rho}$ for $0 < \rho < \infty$ and 
\[
A(\rho_1,\rho_2) = \set{x \in \R^3 : \rho_1 \le |x| < \rho_2},
\quad 
0 < \rho_1 < \rho_2 < \infty.
\]

Galdi \cite{MR2808162} showed that any solution $\bu$ to \eqref{E11} is trivial provided $\bu \in L^{\frac{9}{2}}(\bR^3)$.
The key step is that the decay condition
\begin{equation}
\label{E12}
\lim_{\rho \ti} \frac{1}{\rho} \int_{A(\rho,2\rho)} |\bu|^3 = 0,
\end{equation}
implies $\int_{\R^3} |\na \bu|^2 = 0$, and \eqref{E12} follows from H\"older's inequality under $L^{9/2}$-integrability.
Seregin and Wang \cite{MR3937507} established the inequality
\begin{equation}
\label{E13}
\int_{\bR^3} |\nabla \bu|^2 
\le C_{p,l} \liminf_{\rho \to \infty} \frac{\norm{\bu}^3_{L^{p,l}(A(\rho,2\rho))}}{\rho^{\frac{9}{p}-2}}
\end{equation}
for $p > 3$, $3 \le l \le \infty$ (or $p=l=3$), and further obtained decay criteria in Lorentz spaces, including
\[
\liminf_{\rho \to \infty} \frac{\norm{\bu}_{L^p(A(\rho,2\rho))}}{\rho^{\gamma}} < \infty
\qfs \frac{12}{5} < p < 3 \qand \gamma < \frac{2}{p} - \frac{1}{3}.
\]
Tsai \cite{MR4354995} proved that $\bu = 0$ if 
\[
\liminf_{\rho \to \infty} \frac{\norm{\bu}_{L^p(A(\rho,2\rho))}}{\rho^{\frac{2}{p}-\frac{1}{3}}} = 0
\qfs \frac{12}{5} \le p \le 3.
\]
More recently, Cho et al. \cite{MR4699113} showed that $\bu = 0$ under the general condition
\[
\liminf_{\rho \to \infty} \frac{\norm{\bu}_{L^p(A(\rho,2\rho))}}{\rho^{\frac{2}{p}-\frac{1}{3}}} < \infty
\qfs \frac{3}{2} < p < 3.
\]

One can consider averaged growth conditions of the form
\[
\left( \int_{A(\theta \rho, \theta^{-1} \rho)} |\bu|^p \dx \right)^{\frac{1}{p}} \le \rho^{\frac{2}{p}-\frac{1}{3}} g(\rho)^{\frac{3}{p}-1},
\]
where $0 < \theta < 1$ specifies the averaging region and $g(\rho)$ represents the additional growth.
Many existing Liouville-type results can be formulated in this framework.
Since $\rho^{\frac{2}{p}-\frac{1}{3}}$ represents the sharpest known growth rate, we introduce the factor $g(\rho)^{\frac{3}{p}-1}$, with the exponent chosen to simplify subsequent estimates.

In this paper we refine admissible growth rates by introducing an additional growth function $g$ and formulating Liouville-type criteria in terms of $g$. 
We impose the following conditions.

\begin{assumption}
\label{A1}
Let $g$ be a function from $[1,\infty)$ to $[1,\infty)$ such that 
\begin{enumerate}
\item
$g$ is non-decreasing on $[1,\infty)$,
\item
\begin{equation}
\label{E15}
\lim_{\rho \to \infty} \rho^{-1/3} g(\rho) = 0,
\end{equation}
\item
For all $a \ge 1$,
\begin{equation}
\label{E16}
\int_a^{\infty} \frac{1}{g(\rho)} \frac{d\rho}{\rho} = \infty.
\end{equation}
\end{enumerate}
\end{assumption}

Our main results are as follows.

\begin{theorem}
\label{T1}
Let $g$ satisfy Assumption \ref{A1}.
Suppose $\bu$ is a smooth solution to \eqref{E11} such that 
\begin{enumerate}
\item
$\na \bu \in L^2(\R^3)$,
\item
there exist $0 < \theta < 1$ and $1 \le p < 3$ satisfying 
\begin{equation}
\label{E17}
\limsup_{\rho \ti} \frac{\norm{\bu}_{L^p(A(\theta \rho, \theta^{-1} \rho))}}{\rho^{\frac{2}{p}-\frac{1}{3}} g(\rho)^{\frac{3}{p}-1}} < \infty.
\end{equation}
\end{enumerate}
Then $\bu = 0$.
\end{theorem}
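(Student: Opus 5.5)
Since $\na \bu \in L^2(\R^3)$, the Sobolev inequality gives a constant vector $\bu_0$ with $\bu - \bu_0 \in L^6(\R^3)$. The growth bound \eqref{E17}, combined with \eqref{E15}, forces $\bu_0 = 0$. Indeed, a nonzero constant contributes $\rho^{3/p}$ to the $L^p$ norm on $A(\theta\rho,\theta^{-1}\rho)$. The allowed growth is only $\rho^{\frac2p-\frac13} g(\rho)^{\frac3p-1} = o(\rho^{\frac2p - \frac13 + \frac1p - \frac13}) = o(\rho^{3/p - 2/3})$, which is smaller than $\rho^{3/p}$. So $\bu \in L^6$ and $\na\bu\in L^2$.

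Following the energy method, I test \eqref{E11} against $\bu\phi$ with a cutoff $\phi$ supported in $B(\theta^{-1}\rho)$ and equal to $1$ on $B(\theta\rho)$. For the pressure I use the Bogovskii-type decomposition of Cho et al.: replace $\bu\phi$ by the solenoidal field $\bu\phi - w$, where $\operatorname{div} w = \bu\cdot\na\phi$ on the annulus. This yields a local energy inequality of the form
\[
\int_{B(\theta\rho)}|\na\bu|^2 \le C\int_{A(\theta\rho,\theta^{-1}\rho)}\Big(\frac{|\na\bu||\bu|}{\rho} + |\na\bu||\bu|^2\Big),
\]
or rather, after the Bogovskii estimate, $\le C\|\na \bu\|_{L^2(A)}\big(\rho^{-1}\|\bu\|_{L^2(A)} + \|\bu\|_{L^4(A)}^2\big)$. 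I then interpolate $\|\bu\|_{L^q(A)}$ between $L^p$ (controlled by \eqref{E17}) and $L^6$ (controlled by Sobolev via a localized Gagliardo–Nirenberg inequality in terms of $\|\na\bu\|_{L^2(A)}$ plus a lower-order term). The exponent $\frac2p-\frac13$ and the weight $g^{\frac3p-1}$ are designed so that the power of $\rho$ cancels exactly. The interpolation between $p$ and $6$ gives weight $\lambda$ with $\lambda\cdot(\frac3p-1)$ fixed: for $L^4$ squared, the factor becomes $g(\rho)$ to a first power. The goal is an inequality
\[
E(\theta\rho) \le C\, g(\rho)\,\big(E(\theta^{-1}\rho)-E(\theta\rho)\big)^{\alpha}\quad(\alpha \ge 1),
\]
with $E(r)=\int_{B(r)}|\na\bu|^2$, where the annular energy appears because all quantities live on the annulus.

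Then I iterate or integrate. Suppose $E(\infty)=\|\na\bu\|_2^2 > 0$, so $E(\theta\rho)\ge c_0>0$ for large $\rho$. Since the annular energy is bounded by $E(\infty)$ and $\alpha \ge 1$, I can reduce to $E(\theta^{-1}\rho)-E(\theta\rho)\ge c/g(\rho)$. Summing over dyadic-like scales $\rho_k=\theta^{-2k}$, using monotonicity of $g$ to compare the sum with $\int 1/(g(\rho)\rho)\,d\rho$, and invoking \eqref{E16}, the sum diverges, contradicting $E(\infty)<\infty$. Hence $\na\bu=0$, so $\bu$ is constant, and $\bu_0=0$ gives $\bu=0$.

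The main obstacle is the energy inequality step. It requires getting exactly one power of $g$ and the annular (not full ball) gradient norm on the right. This is delicate for $p$ near $1$ (even $p<3/2$), where the $L^4$ interpolation between $L^p$ and $L^6$ produces a power of $\|\na\bu\|_{L^2(A)}$ exceeding $2$. I would first try the $L^3$-type estimate $\rho^{-1}\int_A|\bu|^3$ and interpolate $L^3$ between $L^p$ and $L^6$. If the resulting exponent is too large, I would absorb the excess using boundedness of $\|\na\bu\|_{L^2}$, which costs nothing since smallness is needed only in the annular factor.
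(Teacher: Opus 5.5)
Your skeleton matches the paper's: a Bogovskii-corrected energy estimate, interpolation of $\bu$ between $L^p$ and $\na\bu$ on $A_\rho=A(\theta\rho,\theta^{-1}\rho)$, and summation over the disjoint annuli at scales $\theta^{-2k}$ against \eqref{E16}. Your preliminary step $\bu\in L^6(\R^3)$ is correct. The paper avoids it by using Lemma \ref{L3}, which gives $\bu=0$ once $\liminf_{\rho\to\infty}\rho^{-1}\norm{\bu}^3_{L^3(A_\rho)}=0$.

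However, the step you yourself call the main obstacle is not resolved, and the computation you lead with is wrong. Interpolating $L^4$ between $L^p$ and $L^6$ and inserting \eqref{E17} gives
\[
\norm{\bu}^2_{L^4(A_\rho)}\lesssim \rho^{\frac13}\, g(\rho)^{\frac{3-p}{6-p}}\,\norm{\bu}_{L^6(A_\rho)}^{\frac{12-3p}{6-p}}.
\]
So the power of $\rho$ does not cancel, and $g$ does not appear to the first power. The leftover $\rho^{1/3}$ cannot be compensated, and that route is dead.

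The cancellation of $\rho$ is special to $L^3$, and this is the idea you are missing. Set $X(\rho)=\norm{\na\bu}^2_{L^2(A_\rho)}$. Gagliardo--Nirenberg and \eqref{E17} give
\[
\rho^{-\frac13}\norm{\bu}_{L^3(A_\rho)}\lesssim_\theta \bigl(g(\rho)X(\rho)\bigr)^{\frac{3-p}{6-p}}+\bigl(\rho^{-\frac13}g(\rho)\bigr)^{\frac3p-1}.
\]
So $g$ and $X$ enter with the same exponent, and your energy inequality becomes $\norm{\na\bu}^2_{L^2(B(\theta\rho))}\lesssim o(1)+(gX)^{\frac{9-3p}{6-p}}$. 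The $o(1)$ covers two terms:
\begin{itemize}
\item $\rho^{-1}\norm{\na\bu}_{L^2(A_\rho)}\norm{\bu}_{L^2(A_\rho)}$, which is $o(1)$ because $\rho^{-1}\norm{\bu}_{L^2(A_\rho)}\lesssim\norm{\bu}_{L^6(A_\rho)}\to0$;
\item the second term above, which is $o(1)$ by \eqref{E15}.
\end{itemize}
You then simply take a root to get $gX\gtrsim1$. Neither ``$\alpha\ge1$'' nor ``exactly one power of $g$'' is needed.

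Your proposed remedy for $p<3/2$ would actually break the argument. Absorbing the excess power of $X$ via $\na\bu\in L^2$, while $g$ keeps the exponent $\frac{9-3p}{6-p}>1$, yields only $X\gtrsim g^{-\frac{9-3p}{6-p}}$. That does not contradict \eqref{E16}; for example, $g(\rho)=\log(e+\rho)$ satisfies Assumption \ref{A1}, yet $\sum_k g(\theta^{-2k})^{-\frac{9-3p}{6-p}}<\infty$.

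With the root-taking correction, your summation step finishes the proof exactly as in the paper. The paper phrases it as: $\liminf_{\rho\to\infty} g(\rho)X(\rho)=0$, hence $\liminf_{\rho\to\infty}\rho^{-1}\norm{\bu}^3_{L^3(A_\rho)}=0$.
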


For $\tfrac{3}{2} < p < 3$, the assumption  $\nabla \bu \in L^2(\R^3)$ is not required, although demonstrating this is nontrivial.
We state this result separately as a theorem.

\begin{theorem}
\label{T2}
Let $g$ satisfy Assumption \ref{A1}.
Suppose $\bu$ is a smooth solution to \eqref{E11} such that there exist $0 < \theta < 1$ and $\tfrac{3}{2} < p < 3$ satisfying \eqref{E17}.
Then $\bu = 0$.
\end{theorem}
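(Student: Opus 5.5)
The plan is to reduce Theorem \ref{T2} to Theorem \ref{T1}. Concretely, we show that when $\tfrac32<p<3$, condition \eqref{E17} by itself forces $\na\bu\in L^2(\R^3)$; Theorem \ref{T1} then gives $\bu=0$. The tool is a local energy inequality on balls. We test \eqref{E11} against $\phi\bu$, where $\phi$ is a cutoff equal to $1$ on $B(r)$, supported in $B(R)$, with $|\na\phi|\lesssim (R-r)^{-1}$, and $\theta\rho\le r<R\le\theta^{-1}\rho$. This gives
\[
\int_{B(r)}|\na\bu|^2 \lesssim \frac{1}{(R-r)^2}\int_{A(r,R)}|\bu|^2+\frac{1}{R-r}\int_{A(r,R)}\big(|\bu|^3+|\pi-c|\,|\bu|\big).
\]
The pressure cannot be handled as in Galdi's argument, since $\bu$ is not known to lie in any global space. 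We therefore localize it: in the annulus we split $\pi - c = \pi_1+\pi_2$, where $\pi_1=\mathcal{R}_i\mathcal{R}_j(u_iu_j\chi)$ for a cutoff $\chi$ of a slightly larger annulus, and $\pi_2$ is harmonic in the annulus. For $\pi_1$ we use Calder\'on--Zygmund bounds; for $\pi_2$ we use mean value estimates controlled by $\pi_1$ and $|\bu|^2$ on the larger region. This is the approach of Cho et al. \cite{MR4699113}, which we would follow.

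Since $p<3$, the cubic term cannot be controlled by the $L^p$ norm alone. We handle it by interpolation, which is where $p>\tfrac32$ enters. By Gagliardo--Nirenberg/Sobolev on the annulus,
\[
\norm{\bu}_{L^3(A)}\lesssim \norm{\bu}_{L^p(A)}^{1-\alpha}\big(\norm{\na\bu}_{L^2(A)}+\rho^{-1}\norm{\bu}_{L^2(A)}\big)^{\alpha}\cdot\text{(scaling factors)},
\]
with $\tfrac13=\tfrac{1-\alpha}{p}+\tfrac{\alpha}{6}$. The cube of this involves $\norm{\na\bu}_{L^2}^{3\alpha}$, and $3\alpha<2$ exactly when $p>\tfrac32$. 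Young's inequality then absorbs this gradient contribution into the left-hand side, up to a small multiple of $\int_{B(R)}|\na\bu|^2$. The $L^2$ term and the pressure terms are bounded the same way. Inserting \eqref{E17}, the $\rho$-powers cancel because the exponent $\tfrac2p-\tfrac13$ is scale-critical for this inequality, and the remaining factor is a power of $g$. The exponent $\tfrac3p-1$ in \eqref{E17} is designed so that the result is
\[
\int_{B(r)}|\na\bu|^2\le \tfrac12\int_{B(R)}|\na\bu|^2 + C\Big(\frac{\rho}{R-r}\Big)^{\beta} g(\rho).
\]
The standard iteration lemma over $r<R$ then yields $\int_{B(\rho)}|\na\bu|^2\lesssim g(\rho)$ for large $\rho$.

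This bound grows, so it does not give $\na\bu\in L^2$ directly. The second stage repeats the energy estimate on shells, using the a priori bound $\int_{B(\rho)}|\na\bu|^2\lesssim g(\rho)$ to refine it. Set $E(\rho)=\int_{B(\rho)}|\na\bu|^2$. We aim for a differential inequality in which $E(\rho)$ is controlled by the boundary (shell) derivative $\rho E'(\rho)$ combined with $g(\rho)$, something like
\[
E(\rho)^{\gamma}\lesssim g(\rho)\,\rho E'(\rho)
\]
for a suitable exponent $\gamma$. If $E$ did not stay bounded, integrating this would give $\int^\infty \tfrac{d\rho}{\rho\, g(\rho)}<\infty$, which contradicts \eqref{E16}. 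Hence $E$ stays bounded and $\na\bu\in L^2$. The hypothesis \eqref{E15} serves to make the lower-order and pressure remainders, which carry $\rho^{-1/3}g$, negligible. Theorem \ref{T1} then finishes the proof.

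The main obstacle is this second stage: setting up a shell inequality in which the $g$ factors appear exactly in the form needed for the divergence \eqref{E16} to contradict, while the harmonic pressure part $\pi_2$ stays controlled only by data on the shell. If the differential inequality fails, the fallback is a dyadic iteration on $E(2^k)$ that sums $1/g(2^k)$.
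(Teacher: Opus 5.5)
Your reduction to Theorem \ref{T1} and your first stage (interpolation with $3\alpha=\frac{18-6p}{6-p}<2$, then Young, then iteration) match the paper's Lemmas \ref{L4} and \ref{L6}. However, the step that actually carries Theorem \ref{T2}, passing from \eqref{E17} to $\na\bu\in L^2(\R^3)$, appears only as a hoped-for inequality with unspecified $\gamma$, and the way you say it would be used is wrong. From $E^\gamma\lesssim\rho g E'$ and $E(a)>0$ you get $\int_a^\infty\frac{d\rho}{\rho g(\rho)}\lesssim\int_a^\infty E'E^{-\gamma}\,d\rho$. For $\gamma\le 1$ the right side is finite exactly when $E$ is \emph{bounded}; for $\gamma>1$ it is finite for every positive $E$. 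So unboundedness of $E$ is never what produces the contradiction. What you need is $\gamma>1$ together with positivity of $E$, and then \eqref{E16} is contradicted outright. In the paper the inequality is $E_{\theta^2}(\rho)\lesssim(\rho g(\rho)E'_{\theta^2}(\rho))^{\frac{9-3p}{6-p}}$, so $\gamma=\frac{6-p}{9-3p}$, and $\gamma>1$ is precisely the condition $p>\frac32$. There is also a structural problem with your choice $E(\rho)=\int_{B(\rho)}|\na\bu|^2$. Here $\rho E'(\rho)$ is a sphere integral and does not dominate the annular norms $\norm{\na\bu}_{L^2(A(\theta\rho,\theta^{-1}\rho))}$ that every local energy estimate produces. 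This is why the paper uses the piecewise-linear cutoff energy $E_\theta$ of Lemma \ref{L1}, for which $\rho E_\theta'(\rho)\gtrsim_\theta\norm{\na\bu}_{L^2(A(\theta\rho,\rho))}^2$. Finally, your Stage 1 output is not $g(\rho)$. Inserting \eqref{E17} into $\rho^{-\frac{6-p}{2p-3}}\norm{\bu}_{L^p}^{\frac{3p}{2p-3}}$ gives $\int_{B(\rho)}|\na\bu|^2\lesssim g(\rho)^{\frac{9-3p}{2p-3}}$, which is worse than $g$ for $p<\frac{12}{5}$.

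The missing idea is a way to bound every term of the local energy inequality (Lemma \ref{L5}) by $g^{\frac{9-3p}{6-p}}\norm{\na\bu}_{L^2(A)}^{\frac{18-6p}{6-p}}$, where $A=A(\theta\rho,\theta^{-1}\rho)$. Two things obstruct this. The first is the lower-order Gagliardo--Nirenberg remainder $\rho^{\frac12-\frac3p}\norm{\bu}_{L^p(A)}$ in the bound for $\norm{\bu}_{L^6(A)}$. The second is the term $\rho^{-1}\norm{\bu}_{L^2(A)}\norm{\na\bu}_{L^2(A)}$, whose power of the shell gradient is wrong. For $p<2$ it is too low, even after interpolating $\norm{\bu}_{L^2(A)}$ between $L^p$ and $L^6$; for $p>\frac{12}{5}$ it is too high. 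The paper resolves both by proving Lemma \ref{L7} by contradiction. Assuming $\rho^{-1/3}\norm{\bu}_{L^3(A)}>M$ for all large $\rho$ gives $\norm{\bu}_{L^6(A)}\gtrsim\rho^{-1/6}$. By \eqref{E15} the remainder is at most half of this, so $\rho^{-1/6}\lesssim\norm{\bu}_{L^6(A)}\lesssim\norm{\na\bu}_{L^2(A)}$. This lower bound raises the gradient power when $p<2$. When $p>\frac{12}{5}$, the excess power is traded via Lemma \ref{L6} for a power of $g$, which one checks stays below $\frac{9-3p}{6-p}$. The output is $\liminf_{\rho\to\infty}K_\theta(\rho)<\infty$, and Lemma \ref{L3}, not a boundedness argument for $E$, then gives $\na\bu\in L^2(\R^3)$. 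Your pressure step also has a hole as written. Mean-value estimates for the harmonic part $\pi_2$ in an annulus require an $L^1$ bound on $\pi_2$ itself, which $\pi_1$ and $|\bu|^2$ do not supply. The paper avoids the pressure entirely by testing with the divergence-free field $\bu\phi^2-T(\bu\cdot\na\phi^2)$ built from Lemma \ref{L2}.
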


\begin{remark}
There are many examples satisfying Assumption \ref{A1}. 
An example of $g$ satisfying \eqref{E15} and \eqref{E16} is obtained by setting $\rho_1=1$, defining $\log \rho_{n+1}=(\rho_n)^\alpha$ for $n \in \N$ and $0 < \alpha < 1/3$, and letting $g(\rho)=(\rho_n)^\alpha$ for $\rho_n \le \rho < \rho_{n+1}$.
When $\alpha=\tfrac{1}{3}$, \eqref{E16} holds, but $\limsup_{\rho \to \infty} \rho^{-1/3} g(\rho)=1$. 
Further examples include logarithmic functions such as 
\[
\log (e+\rho), \quad \log (e+\log(e+\rho)), \quad \log (e+\log (e+\log(e+\rho))), \quad \cdots.
\]
\end{remark}

Related results have also been obtained under assumptions on a potential function $V$ with $\bu = \nabla \cdot V$.
Seregin \cite{MR3538409} treated the case of skew-symmetric $V \in \BMO(\R^3)$.
Chae and Wolf \cite{MR3959933} and Cho et al. \cite{MR4572397} extended this framework by formulating growth conditions on $V$ over large balls, subsuming \cite{MR3538409} as a special case.
More recently, Bang and Yang \cite{bang2024saintvenant} refined these criteria using logarithmic factors.
For alternative approaches, see \cite{MR3162482}, \cite{MR4929556}, \cite{MR3548261}, \cite{MR3571910}, and the references therein.

Organization of the paper.
Section \ref{S2} introduces notation and preliminary lemmas, including the definition and derivative of the energy function.
Section \ref{S3} establishes local energy estimates, among them one derived via an approximation scheme.
Sections \ref{S4} and \ref{S5} present the proofs of Theorems \ref{T1} and \ref{T2}, respectively.

\section{Preliminaries}
\label{S2}

We begin with the notation used throughout the paper.
\begin{itemize}
\item
For tensor-valued functions $F$ and $G$,
\[
F:G = \sum_{i,j} F_{ij} G_{ji}.
\]
\item
Generic positive constants are denoted by $C$, with dependence on a parameter $q$ indicated by $C_q$.
\item
We write $a \lesssim b$ if there exists $C>0$ such that $|a| \le C|b|$, and $a \lesssim_q b$ if the constant depends on $q$.
\item
We denote by $|\Omega|$ the Lebesgue measure of a measurable set $\Omega$.
\item
The average of $f$ over $\Omega$ is denoted by 
\[
(f)_\Omega = \fint_\Omega f = \frac{1}{|\Omega|} \int_\Omega f.
\]
\end{itemize}

We next introduce the energy function $E_\theta(\rho)$ and establish its basic properties.

\begin{lemma}
\label{L1}
Let $0 < \theta < 1$.
For $\rho > 0$, define 
\[
E_{\theta}(\rho) = \intR |\na \bu(x)|^2 \eta_{\theta} \lr{\frac{|x|}{\rho}} \dx,
\]
where $\eta_{\theta} : [0,\infty) \to [0,1]$ is given by 
\begin{align*}
\eta_{\theta}(t) = 
\begin{cases}
1 &\qif 0 \le t < \theta, \\
\frac{1-t}{1-\theta} &\qif \theta \le t < 1, \\
0 &\qif 1 \le t < \infty.
\end{cases} 
\end{align*}
Then $E_{\theta}(\rho) \in C^1((0,\infty))$ and for all $\rho > 0$,
\[
E'_{\theta}(\rho) 
= 
\frac{1}{\rho^2 (1-\theta)}
\int_{A(\theta \rho,\rho)} 
|\na \bu(x)|^2 |x| \dx.
\]
In particular, 
\[
E'_{\theta}(\rho) 
\ge 
\frac{\theta}{\rho (1-\theta)}
\norm{\na \bu}_{L^2(A(\theta \rho,\rho))}^2.
\]
\end{lemma}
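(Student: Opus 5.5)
The plan is to differentiate under the integral sign with respect to $\rho$, exploiting that $\eta_\theta$ is Lipschitz and piecewise linear. First, substitute $x = \rho y$ only to see the structure; it is cleaner to differentiate directly. For fixed $x \neq 0$, the map $\rho \mapsto \eta_\theta(|x|/\rho)$ is Lipschitz, and differentiable except at $\rho = |x|/\theta$ and $\rho = |x|$. Its derivative is
\[
\frac{\partial}{\partial \rho} \eta_\theta\lr{\frac{|x|}{\rho}} = \eta_\theta'\lr{\frac{|x|}{\rho}} \cdot \lr{-\frac{|x|}{\rho^2}} = \frac{|x|}{(1-\theta)\rho^2}\, \mathbf{1}_{\set{\theta\rho < |x| < \rho}},
\]
because $\eta_\theta' = -1/(1-\theta)$ on $(\theta,1)$ and vanishes elsewhere.

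Second, justify the interchange of derivative and integral. Fix a compact interval $[\rho_0,\rho_1] \subset (0,\infty)$. For $\rho$ in it, all integrands are supported in $B(\rho_1)$. Since $\bu$ is smooth, $|\na \bu|^2$ is bounded there. The difference quotients of $\eta_\theta(|x|/\rho)$ are bounded by $\rho_1/((1-\theta)\rho_0^2)$, by the Lipschitz bound. The exceptional set $\set{|x| = \theta\rho} \cup \set{|x|=\rho}$ has measure zero for each $\rho$. Dominated convergence then gives the formula for $E_\theta'(\rho)$. Continuity of $E_\theta'$ in $\rho$ follows from a further dominated convergence argument: after the change of variables $x = \rho y$, we get
\[
E_\theta'(\rho) = \frac{\rho^2}{1-\theta}\int_{A(\theta,1)} |\na \bu(\rho y)|^2 |y|\, dy,
\]
and this is manifestly continuous in $\rho$ by the continuity of $\na\bu$ and uniform boundedness on compacts. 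Hence $E_\theta \in C^1((0,\infty))$.

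Finally, the lower bound follows from $|x| \ge \theta\rho$ on $A(\theta\rho,\rho)$. This gives $E_\theta'(\rho) \ge \frac{\theta\rho}{(1-\theta)\rho^2}\norm{\na\bu}^2_{L^2(A(\theta\rho,\rho))}$, which is the claim.

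The only delicate point is the non-differentiability of $\eta_\theta$ at the two kinks. It is handled by the null-measure remark combined with the Lipschitz domination. Alternatively, one can use the change of variables from the start, which moves all $\rho$-dependence into the smooth factor $\rho^3|\na\bu(\rho y)|^2$.
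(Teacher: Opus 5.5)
Your proof is correct, but it computes the derivative differently from the paper.

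The paper uses the multiplicative increment $\rho(1+h)$ and writes out $\eta_\theta(|x|/(\rho(1+h)))-\eta_\theta(|x|/\rho)$ case by case. It then splits the right difference quotient into three pieces:
\begin{itemize}
\item two thin transition shells, $\set{\theta\rho\le|x|<\theta\rho(1+h)}$ and $\set{\rho\le|x|<\rho(1+h)}$, whose contributions $I(h)$ and $III(h)$ vanish as $h\to0$;
\item a bulk term $II(h)$, which converges explicitly to the stated integral.
\end{itemize}
The left derivative is then said to follow ``similarly,'' and $E_\theta\in C^1$ is asserted without further comment.

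You instead note that $\rho\mapsto\eta_\theta(|x|/\rho)$ is Lipschitz and is differentiable off the null spheres $|x|=\theta\rho$ and $|x|=\rho$. You use the Lipschitz bound as a dominating function, so dominated convergence gives the two-sided derivative in one step. This argument works verbatim for any Lipschitz radial cutoff. You also prove continuity of $E_\theta'$ explicitly through $x=\rho y$, which the paper leaves implicit.

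What the paper's hands-on computation buys is transparency: only the linear ramp contributes, and the kinks contribute nothing in the limit. It also needs only $|\na\bu|^2\in L^1_{loc}$. Your dominated-convergence step needs no more than that either, since you can dominate by $C|\na\bu|^2\mathbf{1}_{B(\rho_1)}$. Your continuity argument as written, however, uses continuity of $\na\bu$.

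A small caution about your closing aside. If you change variables from the start, differentiating $\rho^3|\na\bu(\rho y)|^2$ produces second derivatives of $\bu$. You would then need to integrate by parts in $y$ against $\eta_\theta(|y|)$ to recover the stated formula, so that route is less direct than it sounds.
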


\begin{proof}
Fix $\rho > 0$ and let $h > 0$.
To compute the right derivative of $E_{\theta}$ at $\rho$, we consider 
\[
\frac{E_{\theta}(\rho (1+h)) - E_{\theta}(\rho)}{\rho h} 
= 
\frac{1}{\rho h} \intR |\na \bu(x)|^2 
\lr{\eta_{\theta}\left(\frac{|x|}{\rho (1+h)}\right) 
- \eta_{\theta}\left(\frac{|x|}{\rho}\right)} \dx.
\]
If $0 < h < 1/\theta - 1$, then by the definition of $\eta_{\theta}$,
\begin{align*}
\eta_{\theta}\lr{\frac{|x|}{\rho(1+h)}} 
- \eta_{\theta}\lr{\frac{|x|}{\rho}} 
= 
\begin{cases}
0 &\qif 0 \le \frac{|x|}{\rho} < \theta, \\
1 - \frac{1-\frac{|x|}{\rho}}{1-\theta} &\qif \theta \le \frac{|x|}{\rho} < \theta (1 + h), \\
\frac{1-\frac{|x|}{\rho (1+h)}}{1-\theta} - \frac{1-\frac{|x|}{\rho}}{1-\theta} &\qif \theta (1 + h) \le \frac{|x|}{\rho} < 1, \\
\frac{1-\frac{|x|}{\rho (1+h)}}{1-\theta} &\qif 1 \le \frac{|x|}{\rho} < 1 + h,\\
0 &\qif 1 + h \le \frac{|x|}{\rho} < \infty.
\end{cases} 
\end{align*}
Thus, we split the above integral into three parts as 
\[
\frac{E_{\theta}(\rho (1+h)) - E_{\theta}(\rho)}{\rho h} = I + II + III,
\]
where 
\begin{align*}
I(h)
&= \frac{1}{\rho h} 
\int_{\set{\theta \le \frac{|x|}{\rho} < \theta (1 + h)}} 
|\na \bu(x)|^2 
\frac{\frac{|x|}{\rho}-\theta}{1-\theta} \dx, \\
II(h)
&= \frac{1}{\rho h} 
\int_{\set{\theta (1 + h) \le \frac{|x|}{\rho} < 1}} 
|\na \bu(x)|^2 
\frac{\frac{|x|}{\rho}-\frac{|x|}{\rho (1+h)}}{1-\theta} \dx, \\
III(h)
&= \frac{1}{\rho h} 
\int_{\set{1 \le \frac{|x|}{\rho} < 1 + h}} 
|\na \bu(x)|^2 
\frac{1-\frac{|x|}{\rho (1+h)}}{1-\theta} \dx.
\end{align*}
It is easily checked that $I(h) \to 0$ and $III(h) \to 0$ as $h \to 0$.
The right derivative of $E$ at $\rho$ is obtained as follows 
\begin{align*}
\lim_{h \to 0} II(h) 
&= 
\lim_{h \to 0} \frac{1}{\rho^2 (1-\theta) (1+h)}
\int_{\set{\theta (1 + h) \le \frac{|x|}{\rho} < 1}} 
|\na \bu(x)|^2 |x| \dx
 \\
&= 
\frac{1}{\rho^2 (1-\theta)}
\int_{A(\theta \rho,\rho)} 
|\na \bu(x)|^2 |x| \dx.
\end{align*}
The left derivative is obtained similarly, so $E_{\theta} \in C^1$ and the formula for $E'_{\theta}(\rho)$ holds.
\end{proof}

We end this section with a standard lemma used to eliminate the pressure term from the energy estimates.

For $1 < q < \infty$, denote $L^q_0(\Omega) = \set{f \in L^q (\Omega) : (f)_\Omega = 0}$ and $W^{1,q}_0(\Omega)$ the closure of $C_c^{\infty}(\Omega)$ in the Sobolev space $W^{1,q}(\Omega)$.

\begin{lemma}
[Lemma 3 of \cite{MR4354995}]
\label{L2}
Let $0 < \theta < 1$, $R > 1$, and $1 < q < \infty$.
There is a linear map $T : L^q_0(A(\theta R, R)) \to W^{1,q}_0(A(\theta R, R))$ such that for $f \in L^q_0(A(\theta R, R))$, the vector field $T f \in W^{1,q}_0(A(\theta R, R))$ satisfies 
\[
\nabla \cdot T f = f 
\qand 
\norm{\na T f}_{L^q(A(\theta R, R))} \leq C_{\theta,q} \norm{f}_{L^q(A(\theta R, R))}.
\]
Moreover, $C_{\theta,q} \to \infty$ as $\theta \to 1$.
\end{lemma}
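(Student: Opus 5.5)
The plan is to reduce to a fixed annulus by scaling, build $T$ there with the Bogovskii construction, and prove the blow-up of $C_{\theta,q}$ as $\theta \to 1$ separately by a duality argument.

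\emph{Scaling.} For $f \in L^q_0(A(\theta R,R))$ put $f_R(y) = f(Ry)$. Then $f_R \in L^q_0(A(\theta,1))$. Suppose $w = T_1 f_R \in W^{1,q}_0(A(\theta,1))$ has been constructed with $\nabla\cdot w = f_R$. Define
\[
Tf(x) = R\, w(x/R).
\]
Then $\nabla \cdot Tf(x) = (\nabla\cdot w)(x/R) = f(x)$ and $\na Tf(x) = (\na w)(x/R)$. Both $\norm{\na Tf}_{L^q}$ and $\norm{f}_{L^q}$ pick up the same factor $R^{3/q}$. Hence the constant for $A(\theta R,R)$ equals the one for $A(\theta,1)$ and is independent of $R$. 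Linearity is preserved, so it suffices to treat $R=1$.

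\emph{Fixed annulus.} The domain $\Omega = A(\theta,1)$ is bounded and Lipschitz. I would cover it by finitely many open sets $\Omega_1,\dots,\Omega_N \subset \Omega$. Each $\Omega_k$ is star-shaped with respect to a ball $B_k$ of radius comparable to $1-\theta$; for instance, take intersections of $\Omega$ with cones around $N \sim (1-\theta)^{-2}$ directions. The sets are ordered so that consecutive ones overlap in a set of positive measure.

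\begin{itemize}
\item \emph{Decomposition.} Following Galdi, write $f = \sum_k f_k$ with $\operatorname{supp} f_k \subset \Omega_k$, $(f_k)_{\Omega_k} = 0$ and $\norm{f_k}_{L^q} \lesssim \norm{f}_{L^q}$. One takes a partition of unity $\{\chi_k\}$ subordinate to the cover and corrects the means of the pieces $\chi_k f$ along the chain of overlaps by multiples of fixed bump functions. The total mean is zero, so the final correction vanishes, and every step is linear in $f$.
\item \emph{Local solutions.} On each $\Omega_k$ apply the explicit Bogovskii integral operator, built from a mollifier supported in $B_k$. By Calder\'on--Zygmund theory it maps $L^q_0(\Omega_k)$ into $W^{1,q}_0(\Omega_k)$, with bounds depending only on $q$ and on the ratio $\operatorname{diam}\Omega_k/\operatorname{rad}B_k$.
\item \emph{Assembly.} Extend each local solution by zero and sum them. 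This gives a linear $T$ with $\nabla\cdot Tf = f$ and $\norm{\na Tf}_{L^q} \le C_{\theta,q}\norm{f}_{L^q}$.
\end{itemize}

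\emph{Blow-up of the constant.} The construction only gives a bound, so I would show that any such operator must have a large constant. Fix a smooth mean-zero function $\varphi$ on the unit sphere with $\varphi \not\equiv 0$, and take $f = \psi = \varphi(x/|x|)$ on $A(\theta,1)$; then $f$ has mean zero. Set $V = |A(\theta,1)| \sim 1-\theta$. Since $Tf$ vanishes on the boundary, integration by parts and H\"older's inequality give
\[
\int f\psi = -\int Tf\cdot\na\psi \le \norm{Tf}_{L^q}\norm{\na\psi}_{L^{q'}}.
\]
The Poincar\'e inequality across the thin shell gives $\norm{Tf}_{L^q} \lesssim (1-\theta)\norm{\na Tf}_{L^q}$. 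Combining these with $\int f\psi \sim V$, $\norm{f}_{L^q} \sim V^{1/q}$ and $\norm{\na\psi}_{L^{q'}} \sim V^{1/q'}$ yields
\[
V \lesssim (1-\theta)\, C_{\theta,q}\, V,
\]
so $C_{\theta,q} \gtrsim (1-\theta)^{-1} \to \infty$.

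The main obstacle is the bookkeeping in the decomposition step: keeping each $f_k$ mean-zero while controlling its norm uniformly, given the number of overlapping pieces. For fixed $\theta$ this is standard, and the estimate is all the lemma requires.
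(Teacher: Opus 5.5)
Your proposal is correct. The paper gives no proof of this lemma; it imports it from Tsai, where it is the Bogovski\u{\i} operator on a bounded Lipschitz domain combined with rescaling to make the constant independent of $R$. Your scaling step $Tf(x)=R\,(T_1f_R)(x/R)$ and your star-shaped decomposition are therefore exactly the intended argument. Since only finiteness of the constant for fixed $\theta$ is needed there, the bookkeeping with $N\sim(1-\theta)^{-2}$ cone pieces is more than necessary.

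Where you go beyond the paper is the ``moreover'' clause, which the paper only asserts. Your duality argument is sound: test $\nabla\cdot Tf=f$ against $\psi=f=\varphi(x/|x|)$, integrate by parts using $Tf\in W^{1,q}_0$, and apply the radial Poincar\'e inequality $\|w\|_{L^q}\lesssim(1-\theta)\|\nabla w\|_{L^q}$ on the thin shell (valid for $\theta\ge 1/2$ by integrating along rays from the inner sphere). This gives the quantitative bound $C_{\theta,q}\gtrsim(1-\theta)^{-1}$ with implicit constants independent of $\theta$.

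The bound applies to every $W^{1,q}_0$ solution of $\nabla\cdot v=f$, not just to a linear $T$. That makes it the right reading of the clause, which only has content for the optimal constant. As you note, no upper-bound construction could establish it.
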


\section{Energy estimates}
\label{S3}

In this section we collect several lemmas derived from local energy estimates.

\begin{lemma}
\label{L3}
Let $0 < \theta < 1$.
For $\rho > 1$, define 
\[
K_\theta(\rho) = \rho^{-1} \norm{\bu}_{L^3(A(\theta \rho, \theta^{-1} \rho))}^3.
\]
\begin{enumerate}
\item
If $\liminf_{\rho \to \infty} K_\theta(\rho) = 0$, then $\bu = 0$.
\item
If $\liminf_{\rho \to \infty} K_\theta(\rho) < \infty$, then $\nabla \bu \in L^2(\R^3)$ and $\bu \in L^6(\R^3)$.
\end{enumerate}
\end{lemma}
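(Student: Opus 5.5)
We prove both parts with a local energy estimate on balls, combined with the energy function $E_\theta$ of Lemma~\ref{L1} and the Bogovskii-type map of Lemma~\ref{L2}.

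\textbf{Step 1: a local energy inequality.} Fix $\rho>1$ and test \eqref{E11} against $\bu\,\eta_\theta(|x|/\rho) - T\big(\bu\cdot\nabla\eta_\theta(|\cdot|/\rho) - (\bu\cdot\nabla\eta_\theta)_{A}\big)$ with $A = A(\theta\rho,\rho)$. The Lipschitz cutoff $\eta_\theta$ is admissible; alternatively, mollify it and pass to the limit. Since $\int_A \bu\cdot\nabla\eta_\theta = 0$ by the divergence theorem, the average vanishes and no correction is needed. The Bogovskii correction makes the test function divergence free, so the pressure drops out, and we obtain
\[
E_\theta(\rho) = \int \na\bu : \big(\bu\otimes\nabla\eta\big)\, - \tfrac12\int |\bu|^2 \bu\cdot\nabla\eta
+ \int \big(-\na\bu + \bu\otimes\bu\big) : \na T(\cdot) .
\]
Every term is supported in $A(\theta\rho,\rho)$, and $|\nabla\eta|\lesssim_\theta \rho^{-1}$. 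By H\"older's inequality and Lemma~\ref{L2} with $q=3/2$ and $q=2$,
\[
E_\theta(\rho) \lesssim_\theta \rho^{-1}\norm{\na\bu}_{L^2(A)}\norm{\bu}_{L^2(A)} + \rho^{-1}\norm{\bu}_{L^3(A)}^3 .
\]
Using $\norm{\bu}_{L^2(A)} \lesssim \rho^{1/2}\norm{\bu}_{L^3(A)}$, this becomes
\[
E_\theta(\rho) \lesssim \norm{\na\bu}_{L^2(A)}\, K_\theta(\rho)^{1/3} + K_\theta(\rho).
\]

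\textbf{Step 2: boundedness (part 2).} Along a sequence $\rho_k\to\infty$ with $K_\theta(\rho_k)\le M$, the bound $\norm{\na\bu}_{L^2(A)}^2 \le E_\theta(\theta^{-1}\rho)$ does not close by itself, because the annulus may carry mass. Instead, we bound $\norm{\na\bu}_{L^2(A)}^2 \le C\rho E'_\theta(\rho)$ by Lemma~\ref{L1}. I expect the real difficulty to be controlling the derivative at the chosen radius. To handle it, use the cruder inequality with a wider cutoff: run the same estimate with the cutoff equal to $1$ on $B(\theta\rho)$ and supported in $B(\rho)$, and absorb the gradient term via Young's inequality, $\norm{\na\bu}\,K^{1/3}\le \tfrac12\norm{\na\bu}^2 + CK^{2/3}$. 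This yields
\[
\int_{B(\theta\rho)}|\na\bu|^2 \le \tfrac12\int_{A}|\na\bu|^2 + C(M+M^{2/3}).
\]
To absorb the annulus term, apply the standard iteration lemma of Giaquinta type on radii $\theta\rho \le s<t\le\rho$. The needed $K$ bound at intermediate radii follows because every such annulus sits inside $A(\theta\rho_k,\theta^{-1}\rho_k)$. This gives $\int_{B(\theta\rho_k)}|\na\bu|^2\le C(M)$, hence $\na\bu\in L^2(\R^3)$ by monotone convergence. Then $\bu\in L^6(\R^3)$ follows from Sobolev's inequality: $\bu$ minus a constant lies in $L^6$, and the constant is $0$ because $\liminf K_\theta<\infty$ forces $\rho^{-3}\int_A|\bu|^3\to0$.

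\textbf{Step 3: vanishing (part 1).} If $K_\theta(\rho_k)\to0$, part 2 gives $\na\bu\in L^2$. The Step 1 bound then reads $E_\theta(\rho_k)\lesssim \norm{\na\bu}_{L^2(\R^3)}K_\theta(\rho_k)^{1/3} + K_\theta(\rho_k)\to0$. Since $E_\theta(\rho)$ increases to $\int|\na\bu|^2$, we get $\na\bu=0$. So $\bu$ is constant, and the constant is $0$ because $K_\theta\to0$ along the sequence.

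The main obstacle is Step 2, namely absorbing the annular gradient term without any a priori finiteness of $\na\bu$. The first thing to try is the iteration over nested cutoffs between $\theta\rho$ and $\rho$.
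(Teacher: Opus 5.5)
Your overall strategy matches the paper's. You use a localized energy inequality with a Bogovskii correction to remove the pressure, then Young's inequality, then a Giaquinta-type iteration between two radii at scale $\rho_k$ to absorb $\ep\norm{\na\bu}_{L^2(B(t))}^2$. Step 1, Step 3, and your argument that the additive constant vanishes are fine. One small correction: for the cubic term you need Lemma~\ref{L2} with $q=3$, not $q=3/2$.

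The gap is in the Step 2 iteration: as you set it up, it does not close. Take a cutoff $\phi_{s,t}$ equal to $1$ on $B(s)$ and supported in $B(t)$. The correction $\bw$ must live on a domain $D\supset A(s,t)$ with three properties:

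(a) $D$ has a fixed shape. Taking $D=A(s,t)$ itself fails, because the constant of Lemma~\ref{L2} blows up as $s/t\to1$.

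(b) $D\subset B(t)$, so that $\norm{\na\bu}_{L^2(D)}\norm{\na\bw}_{L^2(D)}$ can be absorbed into $\ep\norm{\na\bu}_{L^2(B(t))}^2$. The fixed choice $D=A(\theta\rho_k,\rho_k)$ fails here: it only gives $\ep\norm{\na\bu}^2_{L^2(B(\rho_k))}$, which the iteration lemma cannot use.

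(c) $D\subset A(\theta\rho_k,\theta^{-1}\rho_k)$. The term $\int(\bu\otimes\bu):\na\bw$ needs $\norm{\bu}_{L^3(D)}$ on all of $D$, not just on $A(s,t)$.

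With your window $\theta\rho_k\le s<t\le\rho_k$, the natural choice $D=A(\theta t,t)$ satisfies (a) and (b). However, it reaches down to radius $\theta^2\rho_k$, and $A(\theta^2\rho_k,\theta\rho_k)$ is not controlled by $K_\theta(\rho_k)$. Your remark that every such annulus sits inside $A(\theta\rho_k,\theta^{-1}\rho_k)$ is true for the transition annulus $A(s,t)$, but not for $D$.

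The repair is to shift the window outward, as the paper does. Take $\rho_k\le r<R\le\theta^{-1}\rho_k$ and $S=A(\theta R,R)$, so that $A(r,R)\subset S\subset B(R)\cap A(\theta\rho_k,\theta^{-1}\rho_k)$. The iteration then gives $\norm{\na\bu}_{L^2(B(\rho_k))}^2\lesssim_\theta K_\theta(\rho_k)+\rho_k^{-1}$.

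Once this is fixed, your later detours become unnecessary. The paper also puts $\norm{\bu\phi}_{L^6}^2\lesssim\int|\na(\bu\phi)|^2$ into the iterated quantity. The single resulting bound, $\norm{\bu}_{L^6(B(\rho))}^2+\norm{\na\bu}_{L^2(B(\rho))}^2\lesssim_\theta K_\theta(\rho)+\rho^{-1}$, yields both parts at once. Your route is correct but longer: it goes through the homogeneous Sobolev embedding modulo constants, then uses Step 1 and the monotonicity of $E_\theta$ for part 1.
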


\begin{proof}
Fix $0 < \theta < 1$ and $\rho > 1$.
Let $\rho \leq r < R \leq \theta^{-1} \rho$.
Let $\phi \in C^2_c(B(R))$ be radially decreasing and nonnegative with $\phi = 1$ on $B(r)$ and $(R-r) |\na \phi| + (R-r)^2 |\na^2 \phi| \le C$ independent of $r, R$.
Denote 
\[
S = A(\theta R, R),
\]
so that $A(r,R) \subset S \subset B(R)$.
Since $\nabla \cdot \bu = 0$, we have $\bu \cdot \na \phi^2 \in L^q_0(S)$ for all $1 < q < \infty$ by the divergence theorem.
Define 
\[
\bv = T(\bu \cdot \na \phi^2)
\qon S,
\]
with $T$ the map from Lemma \ref{L2}.
Then $\nabla \cdot \bv = \bu \cdot \na \phi^2$ and for $1 < q < \infty$,
\begin{equation}
\label{E31}
\norm{\na \bv}_{L^q(S)} 
\lesssim_{\theta, q} 
(R-r)^{-1} \norm{\bu}_{L^q(S)}.
\end{equation}
Multiplying the first equation of \eqref{E11} by $\bu \phi^2 - \bv$ and integrating by parts eliminates the pressure term and yields 
\begin{align*}
\int |\na \bu|^2 \phi^2 
&= \frac{1}{2} \int |\bu|^2 \Delta \phi^2
+ \int \na \bu : (\na \bv)^T \\
&\quad + \frac{1}{2} \int |\bu|^2 \bu \cdot \na \phi^2 
- \int (\bu \otimes \bu) : (\na \bv)^T.
\end{align*}
By Sobolev's inequality and integration by parts,
\[
\left( \int |\bu \phi|^6 \right)^{\frac{1}{3}} 
\lesssim 
\int |\na (\bu \phi)|^2  
= \int |\na \bu|^2 \phi^2 
+ \int |\bu|^2 |\na \phi|^2 
- \frac{1}{2} \int |\bu|^2 \Delta \phi^2.
\]
Hence, 
\begin{align*}
\norm{\bu}_{L^6(B(r))}^2 
+ \norm{\na \bu}_{L^2(B(r))}^2 
&\le 
\left( \int |\bu \phi|^6 \right)^{\frac{1}{3}} 
+
\int |\na \bu|^2 \phi^2 \\
&\lesssim 
\int |\na \bu|^2 \phi^2 
+ (R-r)^{-2} \norm{\bu}_{L^2(S)}^2.
\end{align*}
Collecting the estimates and estimating each term using H\"older's inequality gives
\begin{align*}
\norm{\bu}_{L^6(B(r))}^2 
+ \norm{\na \bu}_{L^2(B(r))}^2 
&\lesssim (R-r)^{-2} \norm{\bu}_{L^2(S)}^2 
+ \norm{\na \bu}_{L^2(S)} \norm{\na \bv}_{L^2(S)} \\
&\quad + (R-r)^{-1} \norm{\bu}_{L^{3}(S)}^3
+ \norm{\bu}_{L^{3}(S)}^2 \norm{\na \bv}_{L^3(S)} \\ 
&=: I + II + III + IV.
\end{align*}
By Young's inequality
\begin{align*}
I
&\lesssim (R-r)^{-2} R \norm{\bu}_{L^3(S)}^2 \\
&\lesssim (R-r)^{-1} \norm{\bu}_{L^3(S)}^3
+ (R-r)^{-4} R^3.
\end{align*}
For any $\ep > 0$ there exists $C_\ep > 0$ such that 
\begin{align*}
II
&\lesssim 
(R-r)^{-1} \norm{\na \bu}_{L^2(S)} 
\norm{\bu}_{L^2(S)} \\
&\le \ep \norm{\na \bu}_{L^2(S)}^2 
+ C_\ep (R-r)^{-2} \norm{\bu}_{L^2(S)}^2 \\
&\le \ep \norm{\na \bu}_{L^2(B(R))}^2 
+ C_\ep I
\end{align*}
by using \eqref{E31} with $q=2$ and Young's inequality.
Similarly, using \eqref{E31} with $q=3$, we get 
\[
IV 
\lesssim_{\theta} 
(R-r)^{-1} \norm{\bu}_{L^{3}(S)}^3.
\]
Combining these estimates, we obtain that for $\rho \leq r < R \leq \theta^{-1} \rho$,
\[
\norm{\bu}_{L^6(B(r))}^2 
+ \norm{\na \bu}_{L^2(B(r))}^2 
\lesssim \ep \norm{\na \bu}_{L^2(B(R))}^2
+ 
C_{\ep, \theta} (R-r)^{-1} \norm{\bu}_{L^3(S)}^3
+ 
C_{\ep, \theta} (R-r)^{-4} \rho^3.
\]
Using $S \subset A(\theta \rho, \theta^{-1} \rho)$, choosing $\ep$ small, and applying an iteration argument (see e.g. Lemma 8 of \cite{MR3720841}) yields 
\[
\norm{\bu}_{L^6(B(r))}^2 
+
\norm{\na \bu}_{L^2(B(r))}^2 
\lesssim_{\theta} 
(R-r)^{-1} \norm{\bu}_{L^3(A(\theta \rho, \theta^{-1} \rho))}^3
+ (R-r)^{-4} \rho^3.
\]
Taking $r = \rho$ and $R = \theta^{-1} \rho$ gives for all $\rho > 1$, 
\[
\norm{\bu}_{L^6(B(\rho))}^2 
+
\norm{\na \bu}_{L^2(B(\rho))}^2 
\lesssim_\theta 
K_\theta(\rho) + \rho^{-1}.
\]

If $\liminf_{\rho \to \infty} K_\theta(\rho) = 0$, then $\bu = 0$.

If $\liminf_{\rho \to \infty} K_\theta(\rho) < \infty$, then $\nabla \bu \in L^2(\R^3)$ and $\bu \in L^6(\R^3)$.
\end{proof}

In the following lemma, including the $L^6$ norm of $\bu$ further simplifies subsequent estimates.
The proof proceeds analogously to Lemma \ref{L3}.

\begin{lemma}
\label{L4}
Let $0 < \theta < 1$.
If $\frac{3}{2} < p < 3$, then for all $\rho > 1$, there exists $C_{\theta} > 0$ such that 
\[
\norm{\bu}_{L^6(B(\rho))}^2
+ \norm{\na \bu}_{L^2(B(\rho))}^2 
\le C_{\theta}
\rho^{-\frac{6-p}{2p-3}} 
\norm{\bu}_{L^p(A(\theta \rho, \theta^{-1} \rho))}^{\frac{3p}{2p-3}} 
+ 
C_{\theta}
\rho^{-1}.
\]
\end{lemma}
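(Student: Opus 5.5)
We follow the proof of Lemma \ref{L3} line by line: same cutoff $\phi$ with $\rho \le r < R \le \theta^{-1}\rho$, same $S = A(\theta R,R)$, same corrector $\bv = T(\bu\cdot\na\phi^2)$ from Lemma \ref{L2}. This gives, as before,
\[
\norm{\bu}_{L^6(B(r))}^2 + \norm{\na \bu}_{L^2(B(r))}^2
\lesssim \ep\norm{\na\bu}_{L^2(B(R))}^2 + C_\ep (R-r)^{-2}\norm{\bu}_{L^2(S)}^2 + (R-r)^{-1}\norm{\bu}_{L^3(S)}^3 + \norm{\bu}_{L^3(S)}^2\norm{\na\bv}_{L^3(S)}.
\]
Using \eqref{E31} with $q=3$, the last two terms are both $\lesssim_\theta (R-r)^{-1}\norm{\bu}_{L^3(S)}^3$. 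The only new ingredient is how the $L^2$ and $L^3$ norms on $S$ are controlled. We no longer bound them by $L^3$ alone. Instead we interpolate between $L^p$ and $L^6$ and absorb the $L^6$ part into the left side. This is why the $L^6$ norm appears on the left.

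\textbf{Cubic term.} Write $1/3 = \lambda/p + (1-\lambda)/6$, so $\lambda = p/(6-p)$ and
\[
\norm{\bu}_{L^3(S)}^3 \le \norm{\bu}_{L^p(S)}^{3\lambda}\norm{\bu}_{L^6(S)}^{3(1-\lambda)}, \qquad 3(1-\lambda) = \frac{3(6-2p)}{6-p}.
\]
The exponent $3(1-\lambda)$ is less than $2$ exactly when $p > 3/2$; this is where the hypothesis on $p$ is used. Apply Young's inequality with exponents $2/(3(1-\lambda))$ and its conjugate:
\[
(R-r)^{-1}\norm{\bu}_{L^3(S)}^3 \le \ep\norm{\bu}_{L^6(B(R))}^2 + C_\ep (R-r)^{-\beta}\norm{\bu}_{L^p(S)}^{\gamma}.
\]
Here $\gamma = 3\lambda\cdot\frac{2}{2-3(1-\lambda)}$, and $\beta$ is the conjugate exponent. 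A computation gives $2-3(1-\lambda) = (4p-6)/(6-p)$, hence
\[
\gamma = \frac{3p}{6-p}\cdot\frac{2(6-p)}{4p-6} = \frac{3p}{2p-3}, \qquad \beta = \frac{2(6-p)}{4p-6} = \frac{6-p}{2p-3}.
\]
These are exactly the exponents in the statement.

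\textbf{Quadratic term.} Since $p<3$ and $6>2$, we can interpolate $L^2$ between $L^p$ and $L^6$ when $p \le 2$. When $p>2$ we simply use Hölder: $\norm{\bu}_{L^2(S)} \lesssim R^{3(1/2-1/p)}\norm{\bu}_{L^p(S)}$. Either way, Young's inequality should give
\[
(R-r)^{-2}\norm{\bu}_{L^2(S)}^2 \lesssim \ep\norm{\bu}_{L^6}^2 + (R-r)^{-\beta}\norm{\bu}_{L^p(S)}^{\gamma} + (R-r)^{-4}\rho^3 .
\]
The cleanest route is to check scaling. With $R-r\sim\rho$, the $L^2$ term scales like $\rho^{-2}\norm{\bu}_{L^p}^2\rho^{3-6/p}$. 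Splitting it by Young's inequality into $\rho^{-\beta}\norm{\bu}_{L^p}^\gamma$ and a power of $\rho$ yields $\rho^{-1}$, because both target terms are scale-consistent: under $\bu_\lambda(x) = \lambda\bu(\lambda x)$ all terms scale like $\rho^{-1}$. Alternatively, bound $\norm{\bu}_{L^2(S)}^2 \lesssim R\norm{\bu}_{L^3(S)}^2$ and then $R\norm{\bu}_{L^3}^2 \le (R-r)\norm{\bu}_{L^3}^3 + R^3(R-r)^{-2}$, as in Lemma \ref{L3}. This reduces everything to the cubic term plus $(R-r)^{-4}\rho^3$, and is what I would do.

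\textbf{Iteration.} The result is, for $\rho\le r<R\le\theta^{-1}\rho$,
\[
f(r) \le 2\ep f(R) + C(R-r)^{-\beta}\norm{\bu}_{L^p(A(\theta\rho,\theta^{-1}\rho))}^{\gamma} + C(R-r)^{-4}\rho^3,
\]
with $f(r) = \norm{\bu}_{L^6(B(r))}^2 + \norm{\na\bu}_{L^2(B(r))}^2$. Here $f$ is finite by smoothness, and we use $S \subset A(\theta\rho,\theta^{-1}\rho)$. The standard iteration lemma (Lemma 8 of \cite{MR3720841}) removes the $\ep f(R)$ term. Then taking $r=\rho$ and $R=\theta^{-1}\rho$ gives the claim.

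The main obstacle is the absorption step. We need the $L^6$ norm on $S\subset B(R)$ to appear with power strictly below $2$, which forces $p>3/2$. We also must verify that the resulting $\rho$-powers match $\rho^{-\beta}$ and $\rho^{-1}$.
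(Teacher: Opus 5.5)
Your proposal is correct and is essentially the paper's proof. You reduce the quadratic term to $(R-r)^{-1}\norm{\bu}_{L^3(S)}^3 + (R-r)^{-4}R^3$ as in Lemma \ref{L3}, interpolate $L^3$ between $L^p$ and $L^6$ and use Young's inequality (where $p>3/2$ enters) to absorb $\ep\norm{\bu}_{L^6(B(R))}^2$, then iterate and set $r=\rho$, $R=\theta^{-1}\rho$; your exponents $\gamma=\frac{3p}{2p-3}$ and $\beta=\frac{6-p}{2p-3}$ match the paper's.
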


\begin{proof}
After the same estimates in the proof of the previous lemma, we obtain that for any $\ep > 0$ there exists $C_{\ep, \theta} > 0$ such that for $\rho \leq r < R \leq \theta^{-1} \rho$,
\[
\norm{\bu}_{L^6(B(r))}^2 
+ \norm{\na \bu}_{L^2(B(r))}^2 
\lesssim \ep \norm{\na \bu}_{L^2(B(R))}^2
+ 
C_{\ep, \theta} (R-r)^{-1} \norm{\bu}_{L^3(S)}^3
+ 
C_{\ep, \theta} (R-r)^{-4} \rho^3.
\]
If $\frac{3}{2} < p < 3$, then for any $\ep > 0$ there exists $C_\ep > 0$ such that 
\begin{align*}
(R-r)^{-1}
\norm{\bu}_{L^3(S)}^3
&\lesssim (R-r)^{-1}
\norm{\bu}_{L^p(S)}^{\frac{3p}{6-p}}
\norm{\bu}_{L^6(S)}^{\frac{18-6p}{6-p}} \\
&\le \ep \norm{\bu}_{L^6(S)}^2 
+ C_\ep (R-r)^{-\frac{6-p}{2p-3}} 
\norm{\bu}_{L^p(S)}^{\frac{3p}{2p-3}}
\end{align*}
by an interpolation inequality and Young's inequality.
Combining these estimates and using $S \subset A(\theta \rho, \theta^{-1} \rho)$, we obtain that for $\rho \leq r < R \leq \theta^{-1} \rho$,
\begin{align*}
\norm{\bu}_{L^6(B(r))}^2
+ \norm{\na \bu}_{L^2(B(r))}^2 
&\lesssim \ep (\norm{\bu}_{L^6(B(R))}^2
+ \norm{\na \bu}_{L^2(B(R))}^2) \\
&\quad + 
C_{\ep, \theta} (R-r)^{-\frac{6-p}{2p-3}} 
\norm{\bu}_{L^p(A(\theta \rho, \theta^{-1} \rho))}^{\frac{3p}{2p-3}} 
+ 
C_{\ep, \theta} (R-r)^{-4} \rho^3.
\end{align*}
Choosing $\ep$ small and applying an iteration argument (see e.g. Lemma 8 of \cite{MR3720841}) yields 
\[
\norm{\bu}_{L^6(B(r))}^2
+ \norm{\na \bu}_{L^2(B(r))}^2 
\lesssim_{\theta} 
(R-r)^{-\frac{6-p}{2p-3}} 
\norm{\bu}_{L^p(A(\theta \rho, \theta^{-1} \rho))}^{\frac{3p}{2p-3}} 
+ 
(R-r)^{-4} \rho^3.
\]
Taking $r = \rho$ and $R = \theta^{-1} \rho$ gives the result.
\end{proof}

We now derive a critical energy inequality for $E_\theta(\rho)$.
Since $\eta_\theta(|x|/\rho)$ in Lemma \ref{L1} is not differentiable, we proceed by approximation.

\begin{lemma}
\label{L5}
Let $0 < \theta < 1$.
Then for all $\rho > 1$ there exists $C_{\theta} > 0$ such that 
\[
E_{\theta}(\rho) 
\le C_{\theta}
\rho^{-1}
\norm{\na \bu}_{L^2(A(\theta \rho,\rho))}
\norm{\bu}_{L^2(A(\theta \rho,\rho))}  
+ 
C_{\theta}
\rho^{-1} 
\norm{\bu}_{L^3(A(\theta \rho,\rho))}^3.
\]
\end{lemma}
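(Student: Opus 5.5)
We test the equation \eqref{E11} against the cutoff $\psi(x)=\eta_\theta(|x|/\rho)$ itself, not its square. The reason is that $\psi$ is only Lipschitz, so $\Delta\psi$ is a measure and we cannot use the identity $\tfrac12\int|\bu|^2\Delta\phi^2$ from Lemma \ref{L3}. Instead we keep $\int \na\bu:(\bu\otimes\na\psi)$, which only needs $\na\psi$, with $|\na\psi|\le ((1-\theta)\rho)^{-1}$ supported in $A(\theta\rho,\rho)$. To make this rigorous we mollify: take $\psi_\ep=\eta_{\theta,\ep}(|x|/\rho)$ with $\eta_{\theta,\ep}$ smooth, radially nonincreasing, equal to $1$ on $[0,\theta+\ep)$, vanishing on $[1-\ep,\infty)$, with $0\le -\eta_{\theta,\ep}'\le (1-\theta-2\ep)^{-1}$, and $\eta_{\theta,\ep}\to\eta_\theta$ pointwise. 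Then $\psi_\ep\in C^\infty_c(B(\rho))$ and $\na\psi_\ep$ is supported in $A(\theta\rho,\rho)$.

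The pressure is removed as in Lemma \ref{L3}. Set $S=A(\theta\rho,\rho)$. Since $\na\cdot\bu=0$ and $\psi_\ep$ is constant near $\partial S$, the divergence theorem gives $\bu\cdot\na\psi_\ep\in L^q_0(S)$. Put $\bv_\ep=T(\bu\cdot\na\psi_\ep)$ with $T$ from Lemma \ref{L2}, so that $\norm{\na\bv_\ep}_{L^q(S)}\lesssim_{\theta,q}\rho^{-1}\norm{\bu}_{L^q(S)}$ uniformly in small $\ep$. Multiplying \eqref{E11} by $\bu\psi_\ep-\bv_\ep$, which is divergence free and compactly supported, and integrating by parts gives
\[
\int|\na\bu|^2\psi_\ep=-\int \na\bu:(\bu\otimes\na\psi_\ep)+\int\na\bu:(\na\bv_\ep)^T+\frac12\int|\bu|^2\bu\cdot\na\psi_\ep-\int(\bu\otimes\bu):(\na\bv_\ep)^T,
\]
up to the index convention for the first term. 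By Hölder's inequality, the first two terms are bounded by $C_\theta\rho^{-1}\norm{\na\bu}_{L^2(S)}\norm{\bu}_{L^2(S)}$, using \eqref{E31}-type bounds with $q=2$. The last two terms are bounded by $C_\theta\rho^{-1}\norm{\bu}_{L^3(S)}^3$, using $q=3$. All these constants are independent of $\ep$.

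Finally we let $\ep\to0$. By dominated convergence (since $\bu$ is smooth on $\overline{B(\rho)}$), the left side tends to $E_\theta(\rho)$, and the bounds on the right are uniform, so the inequality follows. The only delicate points are these: the mollified cutoff must keep $\na\psi_\ep$ supported strictly inside $S$, so that $\bv_\ep\in W^{1,q}_0(S)$ is well defined; the derivative bound must stay uniform; and the constant from Lemma \ref{L2} depends only on $\theta$ and $q$ (the scaling in $\rho$ is absorbed by the factor $\rho^{-1}$ in $\na\psi_\ep$). I expect this uniformity in the approximation to be the main, though mild, obstacle.
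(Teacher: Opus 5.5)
Your proposal is correct and is essentially the paper's proof. Both test with $\bu\psi-T(\bu\cdot\nabla\psi)$ for a regularization $\psi$ of $\eta_\theta(|x|/\rho)$, bound the four terms by H\"older and Lemma \ref{L2} with $q=2,3$, and pass to the limit; the paper uses explicit $C^1$ piecewise-quadratic profiles $\eta_{n,\theta}$ (converging uniformly and rescaled to equal $1$ on $B(\theta\rho)$) where you use mollified cutoffs. One harmless slip: a smooth profile dropping from $1$ to $0$ over an interval of length exactly $1-\theta-2\ep$ cannot satisfy $|\eta_{\theta,\ep}'|\le(1-\theta-2\ep)^{-1}$ everywhere, since that would force $\eta_{\theta,\ep}'$ to be constant there; any uniform bound such as $2(1-\theta)^{-1}$ works just as well.
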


\begin{proof}
Fix $0 < \theta < 1$.
Let $\psi \in C^1_c(B(1))$ be radially decreasing and nonnegative with $\psi = 1$ on $B(\theta)$, and set $\psi_\rho(x) = \psi(x/\rho)$.
Since $\nabla \cdot \bu = 0$, we have $\bu \cdot \na \psi_\rho \in L^q_0(A(\theta \rho,\rho))$ for $1 < q < \infty$ by the divergence theorem.
Let $T$ be the operator from Lemma \ref{L2} and define $\bw = T(\bu \cdot \na \psi_\rho)$.
Then $\nabla \cdot \bw = \bu \cdot \na \psi_\rho$ and for $1 < q < \infty$,
\[
\norm{\na \bw}_{L^q(A(\theta \rho,\rho))} \lesssim_{q, \theta} \rho^{-1} \norm{\bu}_{L^q(A(\theta \rho,\rho))}.
\]
Multiplying the first equation of \eqref{E11} by $\bu \psi_\rho - \bw$ and integrating, we obtain 
\begin{align*}
\int |\na \bu|^2 \psi_\rho 
&= -\int \na \bu :(\bu \otimes \na \psi_\rho) 
+ \int \na \bu : (\na \bw)^T \\
&\quad + \frac{1}{2} \int |\bu|^2 \bu \cdot \na \psi_\rho 
- \int (\bu \otimes \bu) : (\na \bw)^T.
\end{align*}
Applying H\"older's inequality and the bound on $\nabla \bw$ gives
\begin{equation}
\label{E33}
\int |\na \bu|^2 \psi_\rho 
\lesssim_{\theta} 
\rho^{-1} 
\norm{\na \bu}_{L^2(A(\theta \rho,\rho))}
\norm{\bu}_{L^2(A(\theta \rho,\rho))}
+ 
\rho^{-1} 
\norm{\bu}_{L^{3}(A(\theta \rho,\rho))}^3.
\end{equation}
For $n > \frac{2}{1-\theta}$, define $\eta_{n, \theta} \in C^1_c ([0,\infty))$ by 
\begin{align*}
\eta_{n, \theta}(t)
= 
\begin{cases}
1 - \frac{1}{n(1-\theta)}
&\qif 0 \le t < \theta , \\
-\frac{n}{2(1-\theta)} \left( t - \theta \right)^2 + 1 - \frac{1}{n(1-\theta)}
&\qif \theta \le t < \theta + \frac{1}{n}, \\
-\frac{1}{1-\theta} (t-1) - \frac{1}{2n(1-\theta)}
&\qif \theta + \frac{1}{n} \le t < 1 - \frac{1}{n}, \\
\frac{n}{2(1-\theta)} (t-1)^2 
&\qif 1 - \frac{1}{n} \le t < 1,\\
0 
&\qif 1 \le t < \infty.
\end{cases} 
\end{align*}
Then $\eta_{n, \theta} \to \eta_{\theta}$ uniformly, and hence 
\[
E_{\theta}(\rho) 
= \int |\na \bu(x)|^2 \eta_{\theta} \lr{\frac{|x|}{\rho}} \dx 
= \lim_{n \ti} \int |\na \bu(x)|^2 \eta_{n,\theta} \lr{\frac{|x|}{\rho}} \dx.
\]
Using this and \eqref{E33} with $\psi_{\rho}(x) = \frac{n(1-\theta)}{n(1-\theta)-1} \eta_{n, \theta} (|x|/ \rho)$, we get the desired result.
\end{proof}

We next estimate the growth of solutions over annular regions.

\begin{lemma}
\label{L6}
Let $0 < \theta < 1$, $\rho > 1$, and $g : [1, \infty) \to [1,\infty)$.
If there exists $\frac{3}{2} < p < 3$ such that
\[
\norm{\bu}_{L^p(A(\theta \rho, \theta^{-1} \rho))} < \rho^{\frac{2}{p}-\frac{1}{3}} g(\rho)^{\frac{3}{p}-1},
\]
then there exists $C_{\theta} > 0$ such that  
\[
\norm{\bu}_{L^6(B(\rho))}
+ \norm{\na \bu}_{L^2(B(\rho))}
\le C_{\theta} g(\rho)^{\frac{9-3p}{4p-6}}.
\]
\end{lemma}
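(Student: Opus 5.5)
Lemma \ref{L6} should follow directly from Lemma \ref{L4}: substitute the assumed bound on $\norm{\bu}_{L^p(A(\theta \rho, \theta^{-1} \rho))}$ into the right-hand side and track exponents. Lemma \ref{L4} gives, for $\frac32<p<3$ and $\rho>1$,
\[
\norm{\bu}_{L^6(B(\rho))}^2 + \norm{\na \bu}_{L^2(B(\rho))}^2 \le C_\theta \rho^{-\frac{6-p}{2p-3}} \norm{\bu}_{L^p(A(\theta \rho, \theta^{-1} \rho))}^{\frac{3p}{2p-3}} + C_\theta \rho^{-1}.
\]
Inserting $\norm{\bu}_{L^p(A(\theta\rho,\theta^{-1}\rho))} < \rho^{\frac{2}{p}-\frac{1}{3}} g(\rho)^{\frac{3}{p}-1}$ gives a power of $\rho$ and a power of $g(\rho)$, which we compute separately.

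The power of $\rho$ is
\[
-\frac{6-p}{2p-3} + \Bigl(\frac{2}{p}-\frac13\Bigr)\frac{3p}{2p-3} = \frac{-(6-p) + (6-p)}{2p-3} = 0,
\]
since $\bigl(\frac{2}{p}-\frac{1}{3}\bigr)3p = 6-p$. The power of $g$ is
\[
\Bigl(\frac{3}{p}-1\Bigr)\frac{3p}{2p-3} = \frac{3(3-p)}{2p-3} = \frac{9-3p}{2p-3}.
\]
Hence the first term is at most $C_\theta\, g(\rho)^{\frac{9-3p}{2p-3}}$.

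For the second term, $\rho>1$ gives $\rho^{-1}\le 1$. Since $g\ge 1$ and $\frac{9-3p}{2p-3}>0$ for $p<3$, we also have $1\le g(\rho)^{\frac{9-3p}{2p-3}}$. So the whole right-hand side is bounded by $2C_\theta\, g(\rho)^{\frac{9-3p}{2p-3}}$.

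To finish, take square roots and use $a+b\le \sqrt2\,(a^2+b^2)^{1/2}$:
\[
\norm{\bu}_{L^6(B(\rho))} + \norm{\na \bu}_{L^2(B(\rho))} \le C_\theta\, g(\rho)^{\frac{9-3p}{2(2p-3)}} = C_\theta\, g(\rho)^{\frac{9-3p}{4p-6}},
\]
which is the claim. No step is a real obstacle; the only points needing care are the exact cancellation of the $\rho$-power and the use of $g\ge1$ to absorb the $\rho^{-1}$ term.
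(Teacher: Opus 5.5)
Your proposal is correct and follows essentially the same route as the paper: substitute the assumed $L^p$ bound into Lemma~\ref{L4}, observe that the power of $\rho$ cancels and the power of $g$ is $\frac{9-3p}{2p-3}$, absorb the $\rho^{-1}$ term using $g \ge 1$, and take square roots. Your exponent bookkeeping matches the paper's computation exactly.
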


\begin{proof}
From the assumptions and Lemma \ref{L4}, 
\begin{align*}
\norm{\bu}_{L^6(B(\rho))}^2
+ \norm{\na \bu}_{L^2(B(\rho))}^2 
&\lesssim_\theta 
\rho^{-\frac{6-p}{2p-3}} 
\norm{\bu}_{L^p(A(\theta \rho, \theta^{-1} \rho))}^{\frac{3p}{2p-3}} 
+ 
\rho^{-1} \\
&\lesssim g(\rho)^{\frac{9-3p}{2p-3}} + \rho^{-1} \\
&\lesssim g(\rho)^{\frac{9-3p}{2p-3}}.
\end{align*}
Taking square roots gives the desired bound.
\end{proof}

\section{Proof of Theorem \ref{T1}}
\label{S4}

By Lemma \ref{L3}, it suffices to prove 
\[
\liminf_{\rho \to \infty} K_\theta(\rho) = 0.
\]
Applying the Gagliardo--Nirenberg inequality, 
\begin{align*}
K_\theta(\rho)^{\frac{1}{3}} 
&= 
\rho^{-\frac{1}{3}} \norm{\bu}_{L^3(A(\theta \rho, \theta^{-1} \rho))} \\
&\lesssim_{\theta} 
\rho^{-\frac{1}{3}} 
\norm{\bu}_{L^p(A(\theta \rho, \theta^{-1} \rho))}^{\frac{p}{6-p}}
\norm{\na \bu}_{L^2(A(\theta \rho, \theta^{-1} \rho))}^{\frac{6-2p}{6-p}}
+ \rho^{\frac{2}{3}-\frac{3}{p}} \norm{\bu}_{L^p(A(\theta \rho, \theta^{-1} \rho))}.
\end{align*}
From \eqref{E17}, for all sufficiently large $\rho$,
\[
\norm{\bu}_{L^p(A(\theta \rho, \theta^{-1} \rho))}
\lesssim \rho^{\frac{2}{p}-\frac{1}{3}} g(\rho)^{\frac{3}{p}-1}.
\]
Combining the two estimates gives 
\[
K_\theta(\rho)^{\frac{1}{3}} 
\lesssim_{\theta} 
\left(g(\rho) \norm{\na \bu}_{L^2(A(\theta \rho, \theta^{-1} \rho))}^2\right)^{\frac{3-p}{6-p}}
+ \left(\rho^{-\frac{1}{3}} g(\rho)\right)^{\frac{3}{p}-1}.
\]
Since $\lim_{\rho \to \infty} \rho^{-\frac{1}{3}} g(\rho) = 0$ from \eqref{E15}, it remains to show 
\[
\liminf_{\rho \ti} \, (g(\rho) \norm{\na \bu}_{L^2(A(\theta \rho, \theta^{-1} \rho))}^2) = 0.
\]
Suppose, for contradiction, that $\liminf_{\rho \ti} (g(\rho) \norm{\na \bu}_{L^2(A(\theta \rho, \theta^{-1} \rho))}^2) > 0$.
Then there exists $C > 0$ such that for all large $\rho$,
\[
\frac{C}{g(\rho)} < \norm{\na \bu}_{L^2(A(\theta \rho, \theta^{-1} \rho))}^2.
\]
Since $g$ is non-decreasing, for all large $N \in \N$, we have 
\begin{align*}
\int_{\theta^{-2N}}^\infty \frac{1}{g(r)} \frac{dr}{r}
&= 
\sum_{n = N}^\infty \int_{\theta^{-2n}}^{\theta^{-(2n+2)}} \frac{1}{g(r)} \frac{dr}{r} \\
&\le 
\sum_{n = N}^\infty \frac{1}{g(\theta^{-2n})} \int_{\theta^{-2n}}^{\theta^{-(2n+2)}} \frac{dr}{r} \\ 
&=
-2 \log \theta \sum_{n = N}^\infty \frac{1}{g(\theta^{-2n})} \\
&< 
\frac{-2 \log \theta}{C} \sum_{n = N}^\infty 
\norm{\na \bu}_{L^2(A(\theta^{-(2n-1)}, \theta^{-(2n+1)}))}^2 \\
&\le 
\frac{-2 \log \theta}{C} \norm{\na \bu}_{L^2(\R^3)}^2 < \infty,
\end{align*}
which contradicts \eqref{E16} in Assumption \ref{A1}.
This completes the proof of Theorem \ref{T1}.
\qed

\section{Proof of Theorem \ref{T2}}
\label{S5}

Theorem \ref{T2} follows from the next lemma.
Indeed, if Lemma \ref{L7} holds, then combining \eqref{E17} with Lemma \ref{L3} yields $\nabla \bu \in L^2(\R^3)$, and Theorem \ref{T1} implies $\bu \equiv 0$.

\begin{lemma}
\label{L7}
Let $g$ satisfy Assumption \ref{A1}.
For all $0 < \theta < 1$ and $3/2 < p < 3$, we have
\begin{equation}
\label{E50}
\liminf_{\rho \to \infty} \frac{\norm{\bu}_{L^3(A(\theta \rho, \theta^{-1} \rho))}}{\rho^{\frac{1}{3}}}
\le 
\limsup_{\rho \ti} \frac{\norm{\bu}_{L^p(A(\theta \rho, \theta^{-1} \rho))}}{\rho^{\frac{2}{p}-\frac{1}{3}} g(\rho)^{\frac{3}{p}-1}}.
\end{equation}
\end{lemma}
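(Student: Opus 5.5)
If the right-hand side $M$ of \eqref{E50} is infinite there is nothing to prove, so assume $M<\infty$. The plan is to show that $M<\infty$ forces $\bu\equiv 0$, so that the left-hand side of \eqref{E50} vanishes. The tool is a differential inequality for the energy function $E_\theta$ of Lemma \ref{L1}, which will contradict the divergence condition \eqref{E16}. By the definition of $\limsup$, for all large $\rho$,
\[
\norm{\bu}_{L^p(A(\theta\rho,\theta^{-1}\rho))}\le (M+1)\,\rho^{\frac{2}{p}-\frac13}g(\rho)^{\frac3p-1}.
\]
For each such $\rho$ we also know from Lemma \ref{L4} (and Lemma \ref{L6}) that $\na\bu\in L^2(B(\rho))$, so $E_\theta(\rho)$ is finite. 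We do not know $\na \bu\in L^2(\R^3)$, which is why Theorem \ref{T1} cannot be applied directly.

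\textbf{Step 1: an inequality $E\lesssim (g\rho E')^\beta+\ep(\rho)$.} Start from Lemma \ref{L5}. Write $A=A(\theta\rho,\rho)$ and $X(\rho)=\norm{\na\bu}_{L^2(A)}^2$, so that $X\le C_\theta\,\rho\,E_\theta'(\rho)$ by Lemma \ref{L1}.
\begin{itemize}
\item For the cubic term, apply the Gagliardo--Nirenberg inequality on the annulus (as in Section \ref{S4}) to get
\[
\rho^{-1}\norm{\bu}_{L^3(A)}^3\lesssim (g X)^{\frac{9-3p}{6-p}}+\big(\rho^{-1/3}g\big)^{9/p-3}.
\]
\item For the mixed term, use $\norm{\bu}_{L^2(A)}\lesssim\rho^{1/2}\norm{\bu}_{L^3(A)}$. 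This gives
\[
\rho^{-1}\norm{\na\bu}_{L^2(A)}\norm{\bu}_{L^2(A)}\lesssim X^{1/2}\big(\rho^{-1}\norm{\bu}_{L^3(A)}^3\big)^{1/3},
\]
which is again controlled by powers of $gX$ plus a term vanishing as $\rho\to\infty$. Here I use $g\ge1$, and Young's inequality where needed.
\end{itemize}
The outcome should be
\[
E_\theta(\rho)\le C\big(g(\rho)\rho E_\theta'(\rho)\big)^{\beta_1}+C\big(g(\rho)\rho E_\theta'(\rho)\big)^{\beta_2}+\ep(\rho),\qquad \ep(\rho)\to0,
\]
with exponents $0<\beta_i<1$ (e.g.\ $\beta=\frac{9-3p}{6-p}<1$ because $p>\frac32$). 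The error $\ep(\rho)$ is a power of $\rho^{-1/3}g(\rho)$, which tends to zero by \eqref{E15}.

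\textbf{Step 2: ODE argument.} Suppose $\na\bu\not\equiv0$. Then $E_\theta(\rho_0)=e>0$ for some $\rho_0$, and $E_\theta$ is nondecreasing. Enlarging $\rho_0$, we may assume $\ep(\rho)\le e/2$ for $\rho\ge\rho_0$, so the error is absorbed. Since $E_\theta\ge e$, the quantity $g\rho E'$ is bounded below, so the larger exponent dominates and we obtain
\[
E_\theta'(\rho)\ge c\,\frac{E_\theta(\rho)^{\gamma}}{\rho\, g(\rho)},\qquad \gamma=\frac{1}{\max\beta_i}>1 .
\]
Dividing by $E_\theta^\gamma$ and integrating from $\rho_0$ to $\infty$ gives
\[
c\int_{\rho_0}^\infty\frac{d\rho}{\rho g(\rho)}\le\frac{E_\theta(\rho_0)^{1-\gamma}}{\gamma-1}<\infty,
\]
which contradicts \eqref{E16}.

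\textbf{Step 3: conclusion.} Hence $\na\bu\equiv0$, so $\bu$ is a constant vector $\mathbf{c}$. Then
\[
\norm{\bu}_{L^p(A(\theta\rho,\theta^{-1}\rho))}\sim|\mathbf{c}|\rho^{3/p},
\]
while the hypothesis bounds it by $\rho^{\frac2p-\frac13}g^{\frac3p-1}=o(\rho^{\frac3p-\frac23})$, using $g=o(\rho^{1/3})$. This forces $\mathbf{c}=0$. Thus $\bu=0$, the left-hand side of \eqref{E50} is $0$, and \eqref{E50} holds.

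The main obstacle is Step 1. Every term of Lemma \ref{L5} must be bounded by a power strictly less than one of $g\rho E_\theta'$, plus an error that is $o(1)$ uniformly. The point to check carefully is that the exponents produced by the Gagliardo--Nirenberg/Young bookkeeping stay below $1$ exactly when $p>\frac32$. A second point to check is the lower-order Gagliardo--Nirenberg term on the annulus of the domain: it should only contribute powers of $\rho^{-1/3}g(\rho)$.
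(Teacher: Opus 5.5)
Your argument is correct, but it is organized differently from the paper's proof.

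\textbf{The paper's route.} The paper argues by contradiction with a number $M$ strictly between the two sides of \eqref{E50}, and it exploits the left-hand side. The bound $\norm{\bu}_{L^3(A(\theta\rho,\theta^{-1}\rho))}>M\rho^{1/3}$ is converted into the lower bound \eqref{E51}, $\norm{\na\bu}_{L^2(A(\theta\rho,\theta^{-1}\rho))}\gtrsim\rho^{-1/6}$. This lower bound swallows the Gagliardo--Nirenberg remainder and lets powers of $\norm{\bu}_{L^6}$ be traded for powers of $\norm{\na\bu}_{L^2}$. As a result both terms of Lemma \ref{L5} are bounded by the single homogeneous quantity $g^{\beta}\norm{\na\bu}_{L^2}^{2\beta}$, with $\beta=\frac{9-3p}{6-p}$. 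Hitting exactly this exponent in the mixed term is what forces the three cases $\frac32<p<2$, $2\le p\le\frac{12}{5}$ and $\frac{12}{5}<p<3$, the last one via Lemma \ref{L6}.

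\textbf{Your route.} You use only finiteness of the right-hand side. You accept several exponents, $\frac{9-3p}{6-p}$, $\frac{12-3p}{2(6-p)}$ and $\frac12$, all below $1$, together with an additive error $(\rho^{-1/3}g)^{\frac9p-3}\to0$. You absorb both through the monotonicity of $E_\theta$ and the fact that $E_\theta(\rho_0)>0$ when $\na\bu\not\equiv0$. Your H\"older step $\norm{\bu}_{L^2(A)}\lesssim\rho^{1/2}\norm{\bu}_{L^3(A)}$ reduces the mixed term to the cubic one, so no case split is needed.

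\textbf{What each buys.} Your route is shorter and proves more: a finite right-hand side already forces $\bu\equiv0$. That is Theorem \ref{T2} directly, bypassing Lemmas \ref{L3}, \ref{L4}, \ref{L6} and Theorem \ref{T1}. The paper's route instead produces the homogeneous differential inequality \eqref{E54} with one exponent and no absorption step, at the cost of the case analysis.

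\textbf{One point to get right in Step 1.} Bound the cross term $\norm{\na\bu}_{L^2(A)}(\rho^{-1/3}g)^{\frac3p-1}$ by $(gX)^{1/2}$ once $\rho^{-1/3}g\le1$, or by Young's inequality with an exponent $q<2$. Do not use the usual split $\ep X+C_\ep(\cdots)^2$: it produces the power $1$ and breaks Step 2.

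\textbf{Minor remarks.} Your displayed mixed-term bound discards a harmless factor $\rho^{-1/6}$. Finiteness of $E_\theta(\rho)$ follows from smoothness of $\bu$ alone, not from Lemma \ref{L4}.
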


\begin{proof}
\noindent
\textbf{Contradiction setup.}
Suppose there exist $0 < \theta < 1$, $3/2 < p < 3$, and $0 < M < \infty$ such that 
\[
\limsup_{\rho \ti} \frac{\norm{\bu}_{L^p(A(\theta \rho, \theta^{-1} \rho))}}{\rho^{\frac{2}{p}-\frac{1}{3}} g(\rho)^{\frac{3}{p}-1}}
< 
M 
< 
\liminf_{\rho \to \infty} \frac{\norm{\bu}_{L^3(A(\theta \rho, \theta^{-1} \rho))}}{\rho^{\frac{1}{3}}}.
\]
Then for all sufficiently large $\rho$, 
\begin{equation}
\label{E52}
\norm{\bu}_{L^p(A(\theta \rho, \theta^{-1} \rho))}
< M \rho^{\frac{2}{p}-\frac{1}{3}} g(\rho)^{\frac{3}{p}-1}
< \norm{\bu}_{L^3(A(\theta \rho, \theta^{-1} \rho))} \rho^{\frac{2}{p}-\frac{2}{3}} g(\rho)^{\frac{3}{p}-1}.
\end{equation}
Since 
\[
\norm{\bu}_{L^3(A(\theta \rho, \theta^{-1} \rho))} \le C_1 \rho^{1/2} \norm{\bu}_{L^6(A(\theta \rho, \theta^{-1} \rho))}
\]
with $C_1 = |A(\theta, \theta^{-1})|^{1/6}$, we deduce that for all large $\rho$, 
\[
\frac{\rho^{\frac{1}{2}-\frac{3}{p}} \norm{\bu}_{L^p(A(\theta \rho, \theta^{-1} \rho))}}{C_1(\rho^{-\frac{1}{3}} g(\rho))^{\frac{3}{p}-1}} 
< \frac{M}{C_1} \rho^{-\frac{1}{6}}
\le
\norm{\bu}_{L^6(A(\theta \rho, \theta^{-1} \rho))}.
\]
By the Gagliardo--Nirenberg inequality, there exists $C_2 = C_{\theta} > 0$ such that
\[
\norm{\bu}_{L^6(A(\theta \rho, \theta^{-1} \rho))} 
\le C_2 \norm{\na \bu}_{L^2(A(\theta \rho, \theta^{-1} \rho))} 
+ C_2 \rho^{\frac{1}{2}-\frac{3}{p}} \norm{\bu}_{L^p(A(\theta \rho, \theta^{-1} \rho))}. 
\]
From \eqref{E15}, we have 
\[
(\rho^{-\frac{1}{3}} g(\rho))^{\frac{3}{p}-1} \le \frac{1}{2 C_1  C_2}
\] 
for all sufficiently large $\rho$.
Combining estimates, we have for all sufficiently large $\rho$ that
\[
\norm{\bu}_{L^6(A(\theta \rho, \theta^{-1} \rho))} 
\le 2C_2 \norm{\na \bu}_{L^2(A(\theta \rho, \theta^{-1} \rho))} 
\]
Hence, for all sufficiently large $\rho$, 
\begin{equation}
\label{E51}
0 < \frac{M}{C_1 \rho^{1/6}}
< \norm{\bu}_{L^6(A(\theta \rho, \theta^{-1} \rho))} 
\le 2 C_2 \norm{\na \bu}_{L^2(A(\theta \rho, \theta^{-1} \rho))}.
\end{equation}
By Lemma \ref{L5}, we have
\begin{align*}
E_{\theta^2}(\theta^{-1} \rho)
&\lesssim_{\theta} 
\rho^{-1} 
\norm{\bu}_{L^2(A(\theta \rho, \theta^{-1} \rho))} 
\norm{\na \bu}_{L^2(A(\theta \rho, \theta^{-1} \rho))} 
+ 
\rho^{-1} 
\norm{\bu}_{L^3(A(\theta \rho, \theta^{-1} \rho))}^3 \\
&=: 
I + II.
\end{align*}
If we establish that for all large $\rho$,
\begin{equation}
\label{E53}
I + II
\lesssim_{\theta, p, M} 
g(\theta^{-1} \rho)^{\frac{9-3p}{6-p}}
\norm{\na \bu}_{L^2(A(\theta \rho, \theta^{-1} \rho))}^{\frac{18-6p}{6-p}},
\end{equation}
then, Lemma \ref{L1} yields that for all sufficiently large $\rho$, 
\begin{equation}
\label{E54}
E_{\theta^2}(\rho)
\lesssim_{\theta, p, M} \left( \rho g(\rho) E'_{\theta^2}(\rho) \right)^{9-3p \over 6-p}.
\end{equation}
Note that from \eqref{E51}, we have $E_{\theta^2} (\rho) > 0$ for sufficiently large $\rho$ so that
\[
\frac{1}{g(\rho)} \frac{1}{\rho}
\lesssim_{\theta, p, M} 
\frac{E'_{\theta^2}(\rho)}{E_{\theta^2}(\rho)^{{6-p \over 9-3p}}}.
\]
Integrating over $(a,\infty)$ for large $a$ contradicts \eqref{E16}, since ${6-p \over 9-3p} > 1$.
This proves \eqref{E50}.

\noindent
\textbf{Proof of claim \eqref{E53}.}
It remains to show \eqref{E53}.
The estimate of $I$ depends on $p$.
\begin{itemize}
\item
If $3/2 < p < 2$, then an interpolation inequality gives
\[
\norm{\bu}_{L^2(A(\theta \rho, \theta^{-1} \rho))} 
\le 
\norm{\bu}_{L^p(A(\theta \rho, \theta^{-1} \rho))}^{\frac{2p}{6-p}} 
\norm{\bu}_{L^6(A(\theta \rho, \theta^{-1} \rho))}^{\frac{6-3p}{6-p}}.
\]
For all sufficiently large $\rho$, we have 
\[
\rho^{-\frac{1}{6}} 
\lesssim_{\theta} \norm{\bu}_{L^6(A(\theta \rho, \theta^{-1} \rho))}
\lesssim_{\theta} \norm{\nabla \bu}_{L^2(A(\theta \rho, \theta^{-1} \rho))}
\]
from \eqref{E51} so that
\begin{align*}
\norm{\bu}_{L^2(A(\theta \rho, \theta^{-1} \rho))} 
&\lesssim_{\theta}
\rho^{\frac{3-p}{3(6-p)}}
\norm{\bu}_{L^p(A(\theta \rho, \theta^{-1} \rho))}^{\frac{2p}{6-p}} 
\norm{\bu}_{L^6(A(\theta \rho, \theta^{-1} \rho))}^{\frac{12-5p}{6-p}} \\
&\lesssim_{\theta}
\rho^{\frac{3-p}{3(6-p)}}
\norm{\bu}_{L^p(A(\theta \rho, \theta^{-1} \rho))}^{\frac{2p}{6-p}} 
\norm{\nabla \bu}_{L^2(A(\theta \rho, \theta^{-1} \rho))}^{\frac{12-5p}{6-p}}.
\end{align*}
Thus, by \eqref{E52}, for all sufficiently large $\rho$, we have
\begin{align*}
I
&=
\rho^{-1} 
\norm{\bu}_{L^2(A(\theta \rho, \theta^{-1} \rho))} 
\norm{\na \bu}_{L^2(A(\theta \rho, \theta^{-1} \rho))} \\
&\lesssim_{\theta} 
\rho^{-1} \rho^{\frac{3-p}{3(6-p)}}
\norm{\bu}_{L^p(A(\theta \rho, \theta^{-1} \rho))}^{\frac{2p}{6-p}} 
\norm{\na \bu}_{L^2(A(\theta \rho, \theta^{-1} \rho))}^{\frac{18-6p}{6-p}} \\
&\lesssim_{p, M}  
g(\rho)^{\frac{9-3p}{6-p}}
\norm{\na \bu}_{L^2(A(\theta \rho, \theta^{-1} \rho))}^{\frac{18-6p}{6-p}} \\
&\lesssim 
g(\theta^{-1} \rho)^{\frac{9-3p}{6-p}}
\norm{\na \bu}_{L^2(A(\theta \rho, \theta^{-1} \rho))}^{\frac{18-6p}{6-p}},
\end{align*}
where we used the assumption $g$ is non decreasing in the last inequality.
\item
If $2 \le p \le 12/5$, Jensen's inequality and \eqref{E51} give for all sufficiently large $\rho$,
\begin{align*}
I
&= \rho^{-1} 
\norm{\bu}_{L^2(A(\theta \rho, \theta^{-1} \rho))}^{\frac{4p-6}{6-p}} 
\norm{\bu}_{L^2(A(\theta \rho, \theta^{-1} \rho))}^{\frac{12-5p}{6-p}} 
\norm{\na \bu}_{L^2(A(\theta \rho, \theta^{-1} \rho))} \\
&\lesssim_{p, \theta} \rho^{-2 + \frac{3}{p}} 
\norm{\bu}_{L^p(A(\theta \rho, \theta^{-1} \rho))}^{\frac{4p-6}{6-p}} 
\norm{\bu}_{L^6(A(\theta \rho, \theta^{-1} \rho))}^{\frac{12-5p}{6-p}} 
\norm{\na \bu}_{L^2(A(\theta \rho, \theta^{-1} \rho))} \\
&\lesssim_{M,p,\theta}
\rho^{-\frac{2}{3} + \frac{1}{p}}
g(\rho)^{\frac{(4p-6)(3-p)}{p(6-p)}}
\norm{\na \bu}_{L^2(A(\theta \rho, \theta^{-1} \rho))}^{\frac{18-6p}{6-p}}.
\end{align*}
Since $g$ is non decreasing, we have
\[
I
\lesssim_{M,p,\theta}
\rho^{-\frac{2}{3} + \frac{1}{p}}
g(\theta^{-1} \rho)^{\frac{(4p-6)(3-p)}{p(6-p)}}
\norm{\na \bu}_{L^2(A(\theta \rho, \theta^{-1} \rho))}^{\frac{18-6p}{6-p}},
\]
which implies \eqref{E53} since $-\frac{2}{3} + \frac{1}{p} < 0$ and $\frac{(4p-6)(3-p)}{p(6-p)} < \frac{9-3p}{6-p}$.
\item
If $12/5 < p < 3$, we have $\frac{18-6p}{6-p} < 1$. 
By Lemma \ref{L6}, we have for all sufficiently large $\rho$,
\[
\norm{\na \bu}_{L^2(B(\rho))}
\lesssim_{M,p, \theta}  g(\rho)^{\frac{9-3p}{4p-6}}.
\]
Using Jensen's inequality and above estimate, we have for all sufficiently large $\rho$,
\begin{align*}
I
&\lesssim_{p, \theta} \rho^{-\frac{3}{p} + \frac{1}{2}}
\norm{\bu}_{L^p(A(\theta \rho, \theta^{-1} \rho))}
\norm{\na \bu}_{L^2(A(\theta \rho, \theta^{-1} \rho))}^{\frac{18-6p}{6-p}}
\norm{\na \bu}_{L^2(B(\theta^{-1} \rho))}^{\frac{5p-12}{6-p}} \\
&\lesssim_{M, p, \theta} \rho^{-\frac{1}{p} + \frac{1}{6}} 
g(\theta^{-1} \rho)^{\frac{3-p}{p} + \frac{(5p-12)(9-3p)}{(6-p)(4p-6)}} 
\norm{\na \bu}_{L^2(A(\theta \rho, \theta^{-1} \rho))}^{\frac{18-6p}{6-p}}.
\end{align*}
Since $-\frac{1}{p} + \frac{1}{6} < 0$ and $\frac{3-p}{p} + \frac{(5p-12)(9-3p)}{(6-p)(4p-6)} < \frac{9-3p}{6-p}$, \eqref{E53} holds.
\end{itemize}

Finally, interpolation with \eqref{E51} shows that for all large $\rho$,
\begin{align*}
\norm{\bu}_{L^3(A(\theta \rho, \theta^{-1} \rho))}^3
&\lesssim 
\norm{\bu}_{L^p(A(\theta \rho, \theta^{-1} \rho))}^{3p \over 6-p} 
\norm{\bu}_{L^6(A(\theta \rho, \theta^{-1} \rho))}^{18-6p \over 6-p} \\
&\lesssim_{\theta} 
\norm{\bu}_{L^p(A(\theta \rho, \theta^{-1} \rho))}^{3p \over 6-p} 
\norm{\na \bu}_{L^2(A(\theta \rho, \theta^{-1} \rho))}^{18-6p \over 6-p}.
\end{align*}
Thus, from \eqref{E52} along with the non decreasing assumption on $g$, we have for all large $\rho$, 
\[
II
= \rho^{-1} 
\norm{\bu}_{L^3(A(\theta \rho, \theta^{-1} \rho))}^3 
\lesssim_{\theta, p, M} 
g(\theta^{-1} \rho)^{\frac{9-3p}{6-p}}
\norm{\na \bu}_{L^2(A(\theta \rho, \theta^{-1} \rho))}^{18-6p \over 6-p}.
\]
\end{proof}

\section*{Acknowledgement}

The work was supported by the National Research Foundation of Korea(NRF) (RS-2021-NR059843) and (RS-2024-00442483).

%
%
%

\end{document}